\newcolumntype{Y}{>{\raggedright\arraybackslash}X}
\newenvironment{inlinemath}{$}{$}
\NewDocumentCommand \qtext {m} {\quad\text{#1}\quad}
\NewDocumentCommand \Real {} {\mathbb{R}}
\NewDocumentCommand \Natural {} {\mathbb{N}}
\NewDocumentCommand \T { O{} m } {\boldsymbol{#1\mathscr{\MakeUppercase{#2}}}}
\NewDocumentCommand \Mx { O{} m } {{\bm{#1\mathbf{\MakeUppercase{#2}}}}} %
\NewDocumentCommand \V { O{} m } {{\bm{#1\mathbf{\MakeLowercase{#2}}}}}
\NewDocumentCommand \X { } {\T{X}}
\NewDocumentCommand \Xk { O{k} } {\Mx{X}_{(#1)}}
\NewDocumentCommand \Xv { } {\V{x}}
\DeclareDocumentCommand \xi { s }
{
  \IfBooleanTF{#1}
  {x(i_1,i_2,\dots,i_d)}
  {x_{i}}
}
\NewDocumentCommand \M {} {\T{M}}
\NewDocumentCommand \Mk { O{k} } {\Mx{M}_{(#1)}}
\NewDocumentCommand \Mv { } {\V{m}}
\DeclareDocumentCommand \mi { s }
{
  \IfBooleanTF{#1}
  {m(i_1,i_2\dots,i_d)}
  {m_{i}}
}
\NewDocumentCommand \W {} {\T{W}}
\NewDocumentCommand \Wk {} {\Mx{W}_{(k)}}
\DeclareDocumentCommand \wi { s }
{
  \IfBooleanTF{#1}
  {w(i_1,i_2,\dots,i_d)}
  {w_{i}}
}
\NewDocumentCommand \Y {} {\T{Y}}
\NewDocumentCommand \Yk { O{k} } {\Mx{Y}_{(#1)}}
\NewDocumentCommand \Yv { } {\V{y}}
\DeclareDocumentCommand \yi { s }
{
  \IfBooleanTF{#1}
  {y(i_1,i_2,\dots,i_d)}
  {y_{i}}
}
\NewDocumentCommand \Ak { G{k} t' t"  } { \Mx{A}_{#1}\IfBooleanTF{#2}{^{\intercal}}{}\IfBooleanTF{#3}{^{\phantom{\intercal}}}{} }
\NewDocumentCommand \Akset { } {\set{\Ak | k=1,\dots,d}}
\NewDocumentCommand \AkAkt { G{k} } {\Ak{#1}'\Ak{#1}"}
\NewDocumentCommand \ZkZkt { G{k} } {\Zk{#1}'\Zk{#1}"}
\NewDocumentCommand \Ake { G{k} G{i} G{j} } {
  a_{#1}(#2_{#1},#3)
}
\NewDocumentCommand \Gk { G{k}  } { \Mx{G}_{#1} }
\NewDocumentCommand \Gkh { G{k}  } { \Mx[\hat]{G}_{#1} }
\NewDocumentCommand \Gkset { } {\set{\Gk | k=1,\dots,d}}
\NewDocumentCommand \lvec {} {\V{\lambda}}
\NewDocumentCommand \KT { s } {
  \llbracket
  \IfBooleanTF{#1}{\lvec;}{}
  \Ak{1}, \Ak{2}, \dots,  \Ak{d} \rrbracket
}
\NewDocumentCommand \avec {} {\V{a}}
\NewDocumentCommand \gvec {} {\V{g}}
\NewDocumentCommand \Zk { G{k} t' t"} {\Mx{Z}_{#1}\IfBooleanTF{#2}{^{\intercal}}{}%
  \IfBooleanTF{#3}{^{\phantom{\intercal}}}{}}
\NewDocumentCommand \zvec { } {\Mx{Z}}
\NewDocumentCommand \A { } {\Mx{A}}
\NewDocumentCommand \B {t' } {\Mx{B}\IfBooleanTF{#1}{^{\intercal}}{}}
\NewDocumentCommand \I {} {\mathcal{I}}
\NewDocumentCommand \nnset {s m}
{
  \IfBooleanTF{#1}
  {\set{1,2,\dots, #2}}
  {\set{1, \dots, #2}}
}
\NewDocumentCommand \FPD { s m m } {
  \IfBooleanTF{#1}
  {\partial #2 / \partial #3}
  {\frac{\partial #2}{\partial #3}}
}
\NewDocumentCommand \pdf {m m} {p(#1\,\vert\,#2)}
\DeclareMathOperator{\diag}{diag}
\NewDocumentCommand{\vc}{}{\textsc{vec}}
\NewDocumentCommand{\expt}{m}{\mathbb{E}[#1]}
\NewDocumentCommand{\Bern}{}{{\rm{Bernoulli}}}
\NewDocumentCommand{\ifrac}{s m m}{#2 \, \big/ \, \IfBooleanTF{#1}{#3}{(#3)}}
\NewDocumentCommand{\logfrac}{m m}{\log \bigl(\, #1 \, \big/ \, (#2) \, \bigr) }
\NewDocumentCommand \bk { G{k}  } { \V{u}_{#1} }
\NewDocumentCommand \ck { G{k}  } { \V{v}_{#1} }
\NewDocumentCommand \Bk {G{k}} {\Mx{U}_{#1}}
\NewDocumentCommand \Ck {G{k}} {\Mx{V}_{#1}}
\NewDocumentCommand \dOmega {} {\delta_{i \in \Omega}}
\NewDocumentCommand \MouseCaption { } {Components ordered by size (top
  to bottom). %
  Example neurons (26, 62, 82, 117, 154,
  176, 212, 249, 273) highlighted in red. %
  Trial symbols are coded by conditions: color indicates
  turn and  filled indicates a reward.
  The rule changes are denoted by vertical dotted lines.
  Observe that some factors split the trials by turn (green versus orange) and others split by reward (open versus filled), even though the tensor decomposition has no knowledge of the trial conditions.}
\definecolor{constant}{rgb}{0,0,0.8}
\definecolor{fix}{rgb}{0.8,0,0}
\NewDocumentCommand \addfix {} {\textcolor{fix}{+\epsilon}}
\NewDocumentCommand \rge {} {\textcolor{fix}{\;\geq\;}}
\Crefname{ALC@unique}{Line}{Lines}
\title{Generalized Canonical Polyadic Tensor Decomposition%
  \thanks{Sandia National Laboratories is a multimission laboratory
    managed and operated by National Technology and Engineering
    Solutions of Sandia, LLC., a wholly owned subsidiary of Honeywell
    International, Inc., for the U.S. Department of Energy's National
    Nuclear Security Administration under contract DE-NA-0003525.
    This paper describes objective technical results and analysis. Any
    subjective views or opinions that might be expressed in the paper
    do not necessarily represent the views of the U.S. Department of
    Energy or the United States Government.}}
\author{%
  David Hong%
  \thanks{University of Michigan, Ann Arbor, MI (\email{dahong@umich.edu})}
  \and Tamara~G.~Kolda%
  \thanks{Sandia National Laboratories, Livermore, CA (\email{tgkolda@sandia.gov})}
  \and Jed A. Duersch%
  \thanks{Sandia National Laboratories, Livermore, CA (\email{jaduers@sandia.gov})}
 }
\begin{document}

\maketitle

\begin{abstract}
Tensor decomposition is a fundamental unsupervised machine learning method in data science, with applications including network analysis and sensor data processing. This work develops a generalized canonical polyadic (GCP) low-rank tensor decomposition that allows other loss functions besides squared error. For instance, we can use logistic loss or Kullback-Leibler divergence, enabling tensor decomposition for binary or count data. We present a variety of statistically-motivated loss functions for various scenarios. We provide a generalized framework for computing gradients and handling missing data that enables the use of standard optimization methods for fitting the model. We demonstrate the flexibility of GCP on several real-world examples including interactions in a social network, neural activity in a mouse, and monthly rainfall measurements in India.
\end{abstract}

\begin{keywords}
  canonical polyadic (CP) tensor decomposition, CANDECOMP, PARAFAC, Poisson
  tensor factorization, Bernoulli tensor factorization, missing data
\end{keywords}

\section{Introduction}
\label{sec:introduction}

Many data sets are naturally represented as higher-order tensors.
The CANDECOMP/PARAFAC or canonical polyadic (CP) tensor decomposition
builds a low-rank tensor decomposition model and
is a standard tool for unsupervised multiway data analysis \cite{Hi27a,CaCh70,Ha70,KoBa09}.
The CP decomposition is analogous to principal component analysis or the singular value decomposition for two-way data.
Structural features in the dataset are represented as rank-1 tensors, which reduces the size and complexity of the data.
This form of dimensionality reduction has many applications including
data decomposition into explanatory factors, dimensionality reduction, filling in missing data, and data compression.
It has been used to analyze multiway data sets in a variety of domains including neuroscience \cite{AcBiBiBr07,WiKiWaVy18,CoLiKuGo15}, chemistry \cite{MuStGrBr13,JaCaYa14}, cybersecurity \cite{MaGuFa11}, network analysis and link prediction \cite{KoBa06,OpPa09,DuKoAc11}, machine learning \cite{AcYe09,BeGaMo09,Re12}, hyperspectral imaging \cite{ZhWaPlPa08,FaHeLi17}, function approximation \cite{BeMo02,BeMo05,HaKh07,ReDoBe16}, and so on.
In this manuscript, we consider generalizing the \emph{loss function}
for determining the low-rank model.

\begin{figure}[th]
  \centering
  %
\newcommand{\cpfactor}{%
\begin{tikzpicture}
\draw (-0.125,-1) -- (-0.125,1) -- (0.125,1) -- (0.125,-1) -- cycle;
\draw (0.25,1) -- (1.5,1) -- (1.5,0.75) -- (0.25,0.75) -- cycle;
\draw (-0.125,1.125) -- (0.125,1.125) -- (0.75,1.75) -- (0.5,1.75) -- cycle;
\end{tikzpicture}%
}
\newcommand{\tensorblock}{%
\begin{tikzpicture}
\draw (-0.125,-1) -- (-0.125,1) -- (1.5,1) -- (1.5,-1) -- cycle;
\draw (-0.125,1) -- (1.5,1) -- (2.25,1.75) -- (0.625,1.75) -- cycle;
\draw (1.5,1) -- (2.25,1.75) -- (2.25,-0.25) -- (1.5,-1) -- cycle;
\end{tikzpicture}%
}

\begin{tikzpicture}
\node at (0,-0.04) {\tensorblock};
\node at (1.50,0) {\Large $=$};
\node at (2.75,0) {\cpfactor};
\node at (4.00,0) {\Large $+$};
\node at (5.25,0) {\cpfactor};
\node at (6.50,0) {\Large $+$};
\node at (7.25,0) {\Large $\cdots$};
\node at (8.00,0) {\Large $+$};
\node at (9.25,0) {\cpfactor};
\end{tikzpicture}
  \caption{Illustration of CP-structured tensor. The tensor is the sum
    of $r$ components, and each component is the outer product of $d$
    vectors, also known as a rank-1 tensor (here we show $d=3$).
    The rank of such a tensor that has $r$ components is bounded
    above by $r$, so it is low-rank if $r$ is small.}
  \label{fig:cp_structure}
\end{figure}
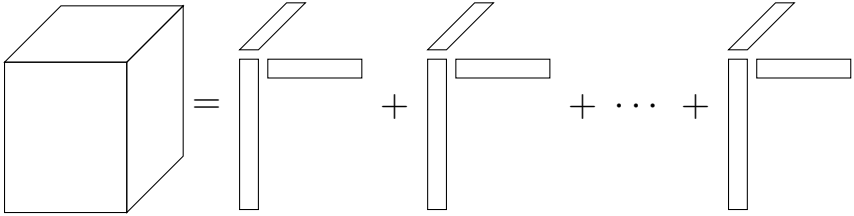

Given a $d$-way data tensor $\X$ of size $n_1 \times n_2 \times \cdots \times n_d$,
we propose a generalized CP (GCP) decomposition
that approximates $\X$ as measured by the sum of elementwise losses specified by a generic function
$f:\Real \otimes \Real \rightarrow \Real$, i.e.,
\begin{equation}\label{eq:gcp-loss}
  \min F(\M; \X) \equiv
  \sum_{i_1=1}^{n_1} \sum_{i_2=1}^{n_2} \cdots \sum_{i_d=1}^{n_d}
  f(\xi,\mi)
  \qtext{subject to}
  \text{$\M$ is low rank}.
\end{equation}
Here, $\M$ is a low-rank model tensor that has CP structure, as
illustrated in \cref{fig:cp_structure}.
We use the shorthand $\xi = \xi*$ and $\mi = \mi*$.
For the usual CP decomposition, the elementwise loss is
\begin{inlinemath}
  f(\xi,\mi) = (\xi-\mi)^2.
\end{inlinemath}
While this loss function is suitable for many situations,
it implicitly assumes the data is normally distributed.
Many datasets of interest, however, do not satisfy this hidden assumption.
Such data can be nonnegative, discrete, or boolean.

Our goal in this paper is to develop a general framework for fitting GCP
models with generic loss functions, enabling the user to adapt the model
to the nature of the data.  For example,
we later see that a natural elementwise loss function for binary tensors,
which have all entries in $\set{0,1}$,
is $f(\xi,\mi) = \log (\mi+1) - \xi \log \mi$.
We show that the GCP gradient has an elegant form that uses the
same computational kernels as the standard CP gradient.
We also cover the case of incomplete tensors, where some data is missing due
to either collection issues or an inability to make
measurements in some scenarios. This is a common issue for real-world datasets,
and it can be easily handled in the GCP framework.

\subsection{Connections to prior work}
\label{sec:conn-prior-work}

Applications of the CP tensor decomposition date back to the 1970 work
of Carrol and Chang \cite{CaCh70} and Harshman \cite{Ha70}, though its
mathematical origins date back to Hitchcock in 1927 \cite{Hi27a}.
Many surveys exist on CP and its application; see, for instance,
\cite{Br97,AcYe09,KoBa09,PaKaFaSi13}.

Our proposed GCP framework uses so-called direct or all-at-once optimization, in contrast to the alternating approach
that is popular for computing CP known as CP-ALS. The direct optimization approach has been considered for
CP by Acar, Dunlavy, and Kolda \cite{DuKoAc11} and Phan, Tichavsk{\'y}, and Cichocki \cite{PhTiCi13a}.
The later case showed that the Hessians have special structure, and similar structure applies in the case of GCP though we do not discuss it here.
The GCP framework can incorporate many of the computational improvements for CP, such as tree-based MTTKRP computations \cite{PhTiCi13}
and ADMM for constraints \cite{HuSiLi15}.
Our approach for handling missing data is essentially the same as that
proposed for standard CP by Acar, Dunlavy, Kolda, and M{\o}rup
\cite{AcDuKoMo11};
the primary difference is that we now have a more elegant and general theory for the
derivatives.

There have been a wide variety of papers that have considered
alternative loss functions, so here we mention some of the most relevant.
The famous nonnegative matrix factorization paper of Lee and Seung \cite{LeSe99}
considered KL divergence in the matrix case,
and Welling and Weber \cite{WeWe01} and others \cite{ShHa05,ChKo12,HaPlKo15}
considered it in the tensor case.
This equates to Poisson
with identity link \cref{eq:poisson} in our framework.
Cichocki, Zdunek, Choi, Plemmons, and Amari \cite{CiZdChPl07} have
considered loss functions based on alpha- and beta-divergences for
nonnegative CP \cite{CiPh09}.  These fit into the GCP framework, and
we explicitly discuss the case of beta divergence.

To the best of our knowledge, no general loss function frameworks have been proposed in the tensor case, but several have been proposed in the matrix case.
Collins, Dasgupta, and Schapire \cite{CoDaSc02}
developed a generalized version of matrix principal component analysis (PCA) based on
loss functions from the exponential family (Gaussian, Poisson with
exponential link, Bernoulli with logit link).
Gordon \cite{Go03} considers a ``Generalized$^2$ Linear$^2$ Model'' for matrix factorization that allows different loss functions and nonlinear relationships between the factors and the low-rank approximation.
Udell et~al.~\cite{UdHoZaBo16} develop a general framework for matrix factorization
that allows for the loss function to be different for each column;
several of their proposed loss functions overlap with ours
(e.g., their ``Poisson PCA'' is equivalent to Poisson with the log link).

\subsection{Contributions}
\label{sec:contributions}

We develop the GCP framework for computing the CP tensor decomposition with
an arbitrary elementwise loss function.
\begin{itemize}
\item The main difference between GCP and standard CP is the choice of
  loss function, so we discuss loss function choices and their
  statistical connections in \cref{sec:loss-functions}.
\item We describe fitting the GCP model in
  \cref{sec:fitting-gcp-small}.  We derive the gradient for GCP with
  respect to the model components, along with a straightforward way of
  handling missing data. We explain how to add regularization and use
  a standard optimization method. %
\item In \cref{sec:experimental-results}, we demonstrate the
  flexibility of GCP on several real-world examples with corresponding
  applications including inference of missing entries,
  and unsupervised pattern extraction over a
  variety of data types.
\end{itemize}

\section{Background and notation}
\label{sec:background-notation}

Before we continue, we establish some basic tensor notation and concepts; see Kolda and Bader \cite{KoBa09} for a full review.

A boldface uppercase letter in Euler font denotes a tensor, e.g., $\X$.
The number of ways or dimensions of the tensor is called the \emph{order}.
Each way is referred to as a \emph{mode}.
A boldface uppercase letter represents a matrix, e.g., $\mathbf{A}$.
A boldface lowercase letter represents a vector, e.g., $\mathbf{v}$.
A lowercase letter represents a scalar, e.g., $x$.

The Hadamard (elementwise) product of two same-sized matrices
$\mathbf{A}, \mathbf{B} \in \Real^{m \times n}$
is denoted by $\mathbf{A} \ast \mathbf{B}  \in \Real^{m \times n}$.
The {Khatri-Rao product} of two matrices
$\mathbf{A} \in \Real^{m \times p}$ and $\mathbf{B} \in \Real^{n \times p}$
is the columnwise Kronecker product, i.e.,
\begin{equation}
\label{eq:KR}
  \mathbf{A} \odot \mathbf{B} =
  \begin{bmatrix}
    a_{11} b_{11} & a_{12} b_{12} & \cdots & a_{1p} b_{1p} \\
    a_{11} b_{21} & a_{12} b_{22} & \cdots & a_{1p} b_{2p} \\
    \vdots & \vdots & \ddots & \vdots \\
    a_{m1} b_{n1} & a_{m2} b_{n2} & \cdots & a_{mp} b_{np} \\
  \end{bmatrix}
  \in \Real^{mn \times p}.
\end{equation}

\subsection{General tensor notation}
\label{sec:tensor-notation}

In the remainder of the paper, we assume all tensors are real-valued $d$-way
arrays of size $n_1 \times n_2 \times \cdots \times n_d$.
We define $n$ and $\bar n$ to be the geometric and arithmetic means of the sizes, i.e.,
\begin{equation}
\label{eq:n}
  n = \sqrt[d]{ \prod_{k=1}^d n_k }
  \qtext{and}
  \bar n = \frac{1}{d} \sum_{k=1}^d n_k.
\end{equation}
In this way, $n^d$ is the total number of elements in the tensor
and $d \bar n$ is the sum of the sizes of all the modes.
As shown above, modes are typically indexed by $k \in \nnset{d}$.

Tensors are indexed using $i$ as shorthand for the \emph{multiindex} $(i_1,i_2,\dots,i_d)$, so that $\xi \equiv \xi*$.
We let $\I$ denote the set of all possible indices, i.e.,
\begin{equation}
\label{eq:set-I}
\I \equiv \nnset{n_1} \otimes \nnset{n_2} \otimes \cdots \otimes \nnset{n_d}.
\end{equation}
It may be the case that some entries of $\X$ are \emph{missing}, i.e.,
were not observed due to measurement problems. We let $\Omega \subseteq \I$ denote the set
of \emph{observed} entries, and then $\I \setminus \Omega$ is the set of missing entries.

The \emph{vectorization} of $\X$ rearranges its elements into a vector of size $n^d$ and is denoted by $\Xv$.
Tensor element $\xi*$ is mapped to $x(i')$ in $\Xv$ where the \emph{linear index} $i' \in \nnset{n^d}$ is given by
$i' = 1 + \sum_{k = 1}^{d} (i_{k} - 1) n'_k$ with $n'_1 = 1$ and $n'_k = \prod_{\ell=1}^{k-1} n_{\ell}$ otherwise.

The mode-$k$ \emph{unfolding} or \emph{matricization}
of $\X$ rearranges its elements into a matrix of size
$n_k \times (n^d/n_k)$ and is denoted as $\Xk$, where the subscript
indicates the mode of the unfolding.
Element $(i_1,\dots,i_d) \in \I$ maps to matrix entry $(i_k,i_k')$ where
\begin{equation}
\label{eq:unfolding}
i_k' = 1 + \sum_{\ell = 1}^{k-1} (i_{\ell} - 1) n'_{\ell}
             + \sum_{\ell = k+1}^{d} (i_{\ell} - 1) (n'_{\ell} / n_k)
\end{equation}

\subsection{Kruskal tensor notation}
\label{sec:kruskal}
We assume the model tensor $\M$ in \cref{eq:gcp-loss} has low-rank
CP structure as illustrated in \cref{fig:cp_structure}.
Following Bader and Kolda \cite{BaKo07}, we refer to this type of tensor as a \emph{Kruskal tensor}.
Specifically, it is defined by a set of $d$ \emph{factor matrices}, $\Ak$ of size $n_k \times r$ for $k=1,\dots,d$,
such that
\begin{equation}
\label{eq:M-low-rank}
\mi \equiv \mi* = \sum_{j=1}^r a_1(i_1,j) a_2(i_2,j) \cdots a_d(i_d,j) \qquad\text{ for all } i \in \I.
\end{equation}
The number of columns $r$ is the same for all factor matrices and equal to the number of \emph{components} ($d$-way outer products) in the model.
In \cref{fig:cp_structure}, the $j$th component is the outer product of the $j$th column vectors of the factor matrices,
i.e., $\Ak{1}(:,j)$, $\Ak{2}(:,j)$, etc.
We denote~\cref{eq:M-low-rank} in shorthand as $\M = \KT$.
The mode-$k$ unfolding of a Kruskal tensor has a special form
that depends on the Khatri-Rao products of the factor matrices, i.e.,
\begin{equation}
  \label{eq:Zk}
  \Mk = \Ak" \Zk'
  \qtext{where}
  \Zk \equiv
  \Ak{d} \odot \cdots \odot \Ak{k+1} \odot
  \Ak{k-1} \odot \cdots \odot \Ak{1}
  .
\end{equation}

If $r$ is relatively small (e.g., $r \leq \mathcal{O}(n)$), then we say $\M$ has \emph{low rank}.
The advantage of finding a low-rank structure is that it is more parsimonious.
The model $\M$ has $n^d$ entries but the number of values to define it is only
\begin{displaymath}
  r \sum_{k=1}^d n_k = d r \bar n \ll n^d.
\end{displaymath}

It is sometimes convenient to normalize the columns of the factor
matrices and have an explicit weight for each component. For
clarity of presentation, we omit this from our main discussion but
do provide this alternative form and related results in \cref{sec:explicit-weights}.

\section{Choice of loss function}
\label{sec:loss-functions}

The difference between GCP and the standard CP formulation is
flexibility in the choice of loss function. In this section, we
motivate alternative loss functions by looking at the statistical
likelihood of a model for a given data tensor.

In statistical modeling, we often want to maximize the \emph{likelihood}
of a model that parameterizes the distribution; see, e.g., \cite[section 8.2.2]{HaTiFr09}.
We assume that we have a parameterized
probability density function (PDF) or probability mass function (PMF)
that gives the likelihood of each entry, i.e.,
\begin{displaymath}
  \xi \sim \pdf{\xi}{\theta_i} \qtext{where} \ell(\theta_i) = \mi,
\end{displaymath}
where $\xi$ is an observation of a random variable and $\ell(\cdot)$ is an invertible \emph{link function}
that connects the model parameter $\mi$ and the corresponding \emph{natural parameter} of the distribution, $\theta_i$.
The link function is oftentimes just the identity function,
but we show the utility of a nontrivial link function in \cref{sec:bernoulli-odds-link}.
Link functions are a common statistical concept and have been used for  generalized matrix factorizations~\cite{CoDaSc02,Go03}.

Our goal is to find the model $\M$ that is the \emph{maximum
  likelihood estimate} (MLE) across all entries.  Conditional
independence of observations\footnote{The independence is conditioned
  on $\M$. Although there are dependencies between the entries of $\M$
  since indeed the entire purpose of the GCP decomposition is to
  discover these dependencies, the observations themselves remain conditionally
  independent.}  means that
the %
overall likelihood is just the product of the likelihoods, so the MLE
is the solution to
\begin{equation}
  \label{eq:joint-prob}
  \max_{\M} \;
  L(\M;\X) \equiv
  \prod_{i \in \Omega} \pdf{\xi}{\theta_i}
  \qtext{with}
  \ell(\theta_i) = \mi \text{ for all } i \in \Omega.
\end{equation}
We are trying to estimate the  parameters $\theta_i$, but we only have \emph{one} observation per random variable $\xi$.
Nevertheless, we are able to make headway because of the low-rank structure of $\M$ and corresponding interdependences of the $\theta_i$'s.
Recall that we have $n^d$ observations but only $d r \bar n$ free variables.

For a variety of reasons, expression \cref{eq:joint-prob} is rather awkward.
Instead we take the negative logarithm to convert the product into a sum.
Since the log is monotonic, it does not change the maximizer.
Negation simply converts the maximization problem into a minimization problem which is common for optimization.
Eliminating $\theta_i$ as well, we arrive at the minimization problem
\begin{equation}
\label{eq:general-function}
\min F(\M;\X) \equiv \sum_{i \in \Omega} f(\xi,\mi)
\qtext{where} f(x,m) \equiv - \log \pdf{x}{\ell^{-1}(m)}.
\end{equation}
In the remainder of this section, we discuss how specific choices of distributions (and corresponding $p$'s) lead
naturally to specific choices for the elementwise loss function $f$.
Each distribution has its own standard notation for the generic parameter $\theta$,
e.g., the Poisson distribution in \cref{sec:poiss-ident-link} refers to its natural parameter as $\lambda$.
Although our focus here is on statistically-motivated choices for the loss function, other options
are possible as well. We mention two, the Huber loss and $\beta$-divergence, explicitly in \cref{sec:choos-loss-funct}.

\subsection{Gaussian distribution and the standard formulation}
\label{sec:gaussian}

In this subsection, we show that the standard squared error loss
function, $f(x,m) = (x-m)^2$, comes from an assumption that the data is Gaussian distributed.
A usual assumption is
that the data has low-rank structure but is contaminated by
``white noise,'' i.e.,
\begin{equation}\label{eq:white_noise}
  \xi = \mi + \epsilon_i \qtext{with} \epsilon_i \sim \mathcal{N}(0,\sigma)
  \qtext{for all} i \in \Omega.
\end{equation}
Here $\mathcal{N}(\mu,\sigma)$ denotes the normal or Gaussian distribution with mean $\mu$ and standard deviation $\sigma$.
We assume $\sigma$ is \emph{constant across all entries}.
We can rewrite \cref{eq:white_noise} to see that the data is Gaussian distributed:
\begin{displaymath}
  \xi \sim \mathcal{N}(\mu_i, \sigma) \qtext{with} \mu_i = \mi  \qtext{for all} i \in \Omega.
\end{displaymath}
In this case, the link function between the natural parameter $\mu_i$ and the model $\mi$ is simply the identity, i.e., $\ell(\mu) = \mu$.

From standard statistics, the PDF for the normal distribution $\mathcal{N}(\sigma,\mu)$ is
\begin{displaymath}
  \pdf{x}{\mu,\sigma} = \ifrac*{ e^{{-(x-\mu)^2}\,/\,{2\sigma^2}} }{ \sqrt{ 2 \pi \sigma^2 }}.
\end{displaymath}
Following the framework in \cref{eq:general-function}, the elementwise loss function is
\begin{displaymath}
  f(x,m) = \ifrac{(x-m)^2}{2 \sigma^2} + \tfrac{1}{2} \log(2 \pi \sigma^2)
  .
\end{displaymath}
Since $\sigma$ is constant, it has no impact on the optimization, so we remove those terms to arrive at the standard form
\begin{displaymath}
  f(x,m) = (x-m)^2 \qtext{for} x,m \in \Real.
\end{displaymath}
Note that this final form is no longer strictly a likelihood which has implications for, e.g., using Akaike information criterion (AIC) or the Bayesian information criterion (BIC) to choose the number of parameters.
In the matrix case, the maximum likelihood derivation can be found in \cite{Yo41}.

It is not uncommon to add a nonnegativity assumption on $\M$ \cite{PaTa94,Pa97,Pa97b,LeSe99,WeWe01},
which may correspond to some prior knowledge about the means being nonnegative.

\subsection{Bernoulli distribution and connections to logistic regression}
\label{sec:bernoulli-odds-link}

In this subsection, we propose a loss function for binary data.
A binary random variable $x \in \set{0,1}$ is Bernoulli distributed
with parameter $\rho \in [0,1]$ if $\rho$ is the probability of
a 1 and, consequently, $(1-\rho)$ is the probability of a zero.
We denote this by $x \sim \Bern(\rho)$.
Clearly, the PMF is given by
\begin{equation}\label{eq:bernoulli-pdf}
  \pdf{x}{\rho} = \rho^x (1-\rho)^{(1-x)} \quad x \in \set{0,1}.
\end{equation}
A reasonable model for a binary data tensor $\X$ is
\begin{equation}\label{eq:bernoulli}
  \xi \sim \Bern(\rho_i) \qtext{where} \ell(\rho_i) = \mi.
\end{equation}
If we choose $\ell$ to be the identity link, then we need to constrain
$\mi \in [0,1]$ which is a complex nonlinear constraint, i.e.,
\begin{equation}\label{eq:nonlinear_constraint}
  0 \leq \sum_{j=1}^r a_1(i_1,j) a_2(i_2,j) \cdots a_d(i_d,j)  \leq 1
  \qtext{for all} i \in \I.
\end{equation}
Instead, we can use a different link function.

One option for the link function is to
work with the \emph{odds} ratio, i.e.,
\begin{equation} \label{eq:odds-link}
  \ell(\rho) = \ifrac{\rho}{1-\rho}.
\end{equation}
It is arguably even easier to think in terms of odds ratios than the probability, so this is a natural transformation.
For any $\rho \in [0,1)$, we have $\ell(\rho) \geq 0$. Hence, using
\cref{eq:odds-link} as the link function means that we need only
constrain $\mi \geq 0$. This constraint can be enforced by requiring the
factor matrices to be nonnegative, which is a bound constraint and much
easier to handle than the nonlinear constraint \cref{eq:nonlinear_constraint}.
With some algebra, it is easy to show that we can write the log of \cref{eq:bernoulli-pdf} as
\begin{displaymath}
  -\log \bigl( \pdf{x}{\rho} \bigr) = \logfrac{1}{1-\rho} - x \logfrac{\rho}{1-\rho}.
\end{displaymath}
Plugging this and the link function \cref{eq:odds-link} into our general framework in \cref{eq:general-function}
yields the elementwise loss function
\begin{displaymath}
  f(x,m) = \log(1+m) - x \log m \qtext{for} x \in \set{0,1}, m \geq 0.
\end{displaymath}
For a given odds $m \geq 0$, the associated probability is $\rho = m / (1+m)$.
Note that $f(1,0) = -\infty$ because this represents a statistically impossible situation. In practice, we
replace $\log m$ with $\log (m+\epsilon)$ for some small $\epsilon>0$ to prevent numerical issues.

Another common option for the link function is to work with the log-odds, i.e.,
\begin{equation}\label{eq:log-odds-link}
  \ell(\rho) = \logfrac{\rho}{1-\rho}.
\end{equation}
It is so common that it has a special name: \emph{logit}.
The loss function then becomes
\begin{displaymath}
  f(x,m) = \log(1+e^m) - xm \qtext{for} x\in \set{0,1}, m \in \Real,
\end{displaymath}
and the associated probability is $\rho = e^m/(1+e^m)$.
In this case, $m$ is completely unconstrained and can be any real value.
This is the transformation commonly used in logistic regression.
A form of logistic tensor decomposition for a different type of decomposition called DEDICOM was proposed by Nickel and Tresp \cite{NiTr13}.

We contrast the odds and logit link functions in terms of the interpretation of
the components.
An advantage of odds with nonnegative factors
is that each component can only \emph{increase} the
probability of a 1.
The disadvantage is that it requires a nonnegativity constraint.
The logit link is common in statistics
and has the advantage that it does not require any constraints.
A potential disadvantage is that it may be harder to interpret components
since they can counteract one another.
Moreover, depending on the signs of its factors,
an individual component can simultaneously
increase the probability of a 1 for some entries
while reducing it for others.
As such, interpretations may be nuanced.

\subsection{Gamma distribution for positive continuous data}
\label{sec:gamma-dist}
There are several distributions for handling nonnegative continuous data. As mentioned previously, one option is to assume a Gaussian distribution but impose a nonnegativity constraint. Another option is a Rayleigh distribution, discussed in the next subsection.
Yet another is the gamma distribution (for strictly positive data), with PDF
\begin{equation}\label{eq:gamma-pdf}
  \pdf{x}{k,\theta} = \bigl( \ifrac{ x^{k-1} }{ \Gamma(k)\, \theta^{k} } \bigr) \; e^{-x/\theta} \qtext{for} x > 0,
\end{equation}
where $k>0$ and $\theta>0$ are called the shape and scale parameters respectively and $\Gamma(\cdot)$ is the Gamma function.%
\footnote{The Gamma distribution may alternatively by parameterized by $\alpha = k$ and $\beta=1/\theta$.}
We assume $k$ is \emph{constant across all entries and given}, in which case this is a member of the exponential family of distributions.
For example, $k=1$ and $k=2$ are the exponential and chi-squared distributions, respectively.
If we use the link function $\ell(\theta) = \theta/k$ which induces a positivity constraint $m>0$ as a byproduct\footnote{This also means that we set $m$ to be the expected mean value, i.e., $m = \expt{x} = k\theta$.}, and plug this and \cref{eq:gamma-pdf} into \cref{eq:general-function}
and remove all constant terms (i.e., terms involving only $k$), the elementwise loss function is
\begin{equation}\label{eq:gamma-loss}
  f(x,m) = \log(m) + x/m \qtext{for} x > 0, m > 0.
\end{equation}
In
practice, we use the constraint $m \geq 0$ and replace $m$ with $m+\epsilon$ (with small $\epsilon$) in the
loss function \cref{eq:gamma-loss}.

\subsection{Rayleigh distribution for nonnegative continuous data}
\label{sec:rayleigh-mean-link}
As alluded to in the previous subsection
the \emph{Rayleigh} distribution is a distribution for nonnegative data.
The PDF is
\begin{equation}\label{eq:rayleigh-pdf}
  \pdf{x}{\sigma} = \bigl( \ifrac*{x}{\sigma^2} \bigr) \; e^{-x^2 / (2 \sigma^2)} \qtext{for} x \geq 0,
\end{equation}
where $\sigma > 0$ is called the \emph{scale} parameter.
The link $\ell(\sigma) = \sqrt{\pi/2} \; \sigma$ (corresponding to the mean) induces a positivity constraint on $m$.
Plugging this link and \cref{eq:rayleigh-pdf} into \cref{eq:general-function} and removing the constant terms yields the loss function
\begin{equation}\label{eq:rayleigh-loss}
  f(x,m) = 2\log(m) + \tfrac{\pi}{4} \; (x/m)^2 \qtext{for} x \geq 0, m > 0.
\end{equation}
We again replace $m>0$ with $m \geq 0$ and replace $m$ with $m+\epsilon$ (with small $\epsilon$) in the
loss function \cref{eq:rayleigh-loss}.

\subsection{Poisson distribution for count data}
\label{sec:poiss-ident-link}

If the tensor values are counts, i.e., \emph{natural} numbers
($\Natural=\set{0,1,2,\dots}$), then they can be modelled as a Poisson
distribution, a \emph{discrete} probability distribution
commonly used to describe the number of events that occurred
in a specific window in time, e.g., emails per month.  The PMF for a
Poisson distribution with mean $\lambda$ is given by
\begin{equation}\label{eq:poisson-pdf}
  \pdf{x}{\lambda} = \ifrac*{e^{-\lambda} \lambda^x}{x!} \qtext{for} x \in \Natural.
\end{equation}
If we use the identity link function ($\ell(\lambda) = \lambda$) and
\cref{eq:poisson-pdf} in \cref{eq:general-function} and drop
constant terms, we have
\begin{equation}\label{eq:poisson}
  f(x,m) = m - x \log m \qtext{for} x \in \Natural, m \geq 0.
\end{equation}
This loss function has been studied  previously by Welling and Weber \cite{WeWe01} and Chi and Kolda \cite{ChKo12} in the case of tensor decomposition;  Lee and Seung introduced it in the context of matrix factorizations \cite{LeSe99}.
As in the Bernoulli case, we have a statistical impossibility if $x>0$ and $m=0$, so we make the same correction
of adding a small $\epsilon$ inside the log term.

Another option for the link function is the \emph{log link}, i.e., $\ell(\lambda) = \log \lambda$.
In this case, the loss function becomes
\begin{equation}\label{eq:poisson-log}
  f(x,m) = e^m - xm \qtext{for} x \in \Natural, m \in \Real.
\end{equation}
The advantage of this approach is that $m$ is unconstrained.

\subsection{Negative binomial for count data}
\label{sec:negative-binomial}

Another option for count data is the negative binomial (NegBinom)
distribution. This distribution models the number of trials required before
we experience $r\in \Natural$ failures, given that the probability of failure is
$\rho \in [0,1]$.
The PMF is given by
\begin{equation}\label{eq:nb-pdf}
  \pdf{x}{r,\rho} = {{x+r-1}\choose{k}} \rho^x (1-\rho)^r
  \qtext{for} x \in \Natural.
\end{equation}
If we use the odds link \cref{eq:odds-link} with the probability of failure $\rho$, then the loss function for a given number of failures $r$ is
\begin{displaymath}
  f(x,m) = (r+x) \log (1+m) - x \log m \qtext{for} x \in \Natural, m > 0.
\end{displaymath}
We could also use a logit link \cref{eq:log-odds-link}.
This is sometimes used as an alternative when Poisson is overdispersed.

\subsection{Choosing the loss function}
\label{sec:choos-loss-funct}

Our goal is to give users flexibility in the choice of loss function.
In rare cases where the generation of the data is well understood, the loss function may be easily prescribed.
In most real-world scenarios, however, some guesswork is required.
The choice of fit function corresponds to an assumption on how the data is generated (e.g., according to a Bernoulli distribution) and we further assume that the parameters for the data generation form a low-rank tensor. Generally, users would experiment with several different fit functions and several choices for the model rank.

An overview of the statistically-motivated
loss functions that we have discussed is presented in
\cref{tab:loss-functions}.
The choices of Gaussian,
Poisson with log link, Bernoulli  with logit link, and Gamma with given $k$ are part of the \emph{exponential family} of loss
functions, explored by Collins et al.~\cite{CoDaSc02} in the case of
matrix factorization.
We note that some parameters are assumed to be constant (denoted in blue). For the normal and Gamma distributions, the constant terms ($\sigma$ and $k$, respectively) do not even appear in the loss function. The situation is different for the negative binomial, where $r$ does show up in the loss function.
We have modified the positivity constraints ($m > 0$) to instead be nonnegativity constraints ($m \geq 0$) by adding a small $\epsilon=10^{-10}$ in appropriate places inside the loss functions; these changes are indicated in red. This effectively converts the constraint to $m \geq \epsilon$.
The modification is pragmatic since otherwise finite-precision arithmetic yields in $\pm \infty$ gradient and/or function values.
In the sections that follow, nonnegativity of $\M$ is enforced by requiring that the factor matrices ($\Akset$) be nonnegative.

\begin{table}[th]
  \centering\footnotesize
  \caption{Statistically-motivated loss functions. Parameters in blue are assumed to be constant. Numerical adjustments are indicated in red.}
  \label{tab:loss-functions}
  \renewcommand{\arraystretch}{1.5}
  \begin{tabular}{|l|l|l|l|}
    \hline
    Distribution & Link function  & Loss function & Constraints\\ \hline
    $\mathcal{N}(\mu,\textcolor{constant}{\sigma})$ & $m=\mu$ & $(x{-}m)^2$ & $x,m \in \Real$\\ \hline
    Gamma$(\textcolor{constant}{k},\sigma)$ & $m=k \sigma$ & $x / (m \addfix) + \log (m \addfix)$ & $x>0, m \rge 0$\\ \hline
    Rayleigh$(\theta)$ & $m=\sqrt{\pi/2}\,\theta$ & $2\log(m\addfix) + (\pi/4) (x/(m\addfix))^2$ & $x>0, m \rge 0$\\ \hline
    Poisson$(\lambda)$ & $m=\lambda$ & $m-x \log (m \addfix)$ & $x \in \mathbb{N}, m \rge 0$\\ \cline{2-4}
                       & $m=\log \lambda$ & $e^m-x m$ & $x \in \mathbb{N}$, $m \in \Real$ \\ \hline
    Bernoulli$(\rho)$ & $m={\rho}\,/\,{(1{-}\rho)}$ & $\log(m{+}1) {-} x \log (m\addfix)$ & $x \in \set{0,1}, m \rge 0$ \\ \cline{2-4}
                 & $m=\log ( \rho \,/\,(1-\rho)) $ & $\log(1 {+} e^{m}) - x m$ & $x \in \set{0,1}$, $m \in \Real$ \\ \hline
    NegBinom$(\textcolor{constant}{r},\rho)$ & $m={\rho}\,/\,{(1{-}\rho)}$ & $(r{+}x) \log (1{+}m) - x \log (m\addfix)$ & $x \in \Natural$, $m \rge 0$ \\ \hline
  \end{tabular}
\end{table}

In terms of choosing the loss function from this list, the choice may be dictated by the form of the data.
If the data is binary, for instance, then
one of the Bernoulli choices may be preferred.
Count data may indicate a Poisson or NB distribution.
There are several choices for strictly positive data: Gamma, Rayleigh, and even Gaussian with nonnegativity constraints.

The list of possible loss functions and constraints in \cref{tab:loss-functions} is by no means
comprehensive, and many other choices are possible.  For instance, we
might want to use the \emph{Huber loss} \cite{Hu64}, which is quadratic
for small values of $|x-m|$ and linear for larger values. This is a robust loss
function \cite{HaTiFr09}. The Huber loss is
\begin{equation}\label{eq:huber}
  f(x,m; \Delta) =
  \begin{cases}
    (x-m)^2 & \text{if } |x-m| \leq \Delta, \\
    2\Delta|x-m|-  \Delta^2 & \text{otherwise}.
  \end{cases}
\end{equation}
This formulation has continuous first derivatives and so can be used
in the GCP framework.
Another option is to consider $\beta$-divergences, which have been popular in matrix
and tensor factorizations \cite{CiPh09,CiAm10,FeId11}.
We give the formulas with the constant terms (depending only on $x$) omitted:
\begin{displaymath}
  f(x,m;\beta) =
  \begin{cases}
    \tfrac{1}{\beta} m^{\beta} - \tfrac{1}{\beta-1} xm^{\beta-1} & \text{if } \beta \in \Real \setminus \set{0,1}, \\
    m - x \log m & \text{if } \beta = 1, \\
    \frac{x}{m} + \log m & \text{if } \beta = 0.
  \end{cases}
\end{displaymath}
Referring to \cref{tab:loss-functions}, $\beta=1$ is the same as Poisson loss with the identity link,
and $\beta=0$ is the same as the Gamma loss with the linear link.

We show a graphical summary of all the loss functions in \cref{fig:loss_functions}.
The top row is for continuous data, and the bottom row is for discrete data.
The Huber loss can be thought of as a smooth approximation of an L1 loss.
Gamma, Rayleigh, and $\beta$-divergence are similar, excepting the sharpness of the dip near the minimum.

\begin{figure}[tpb]
  \centering
  \includegraphics[width=\textwidth]{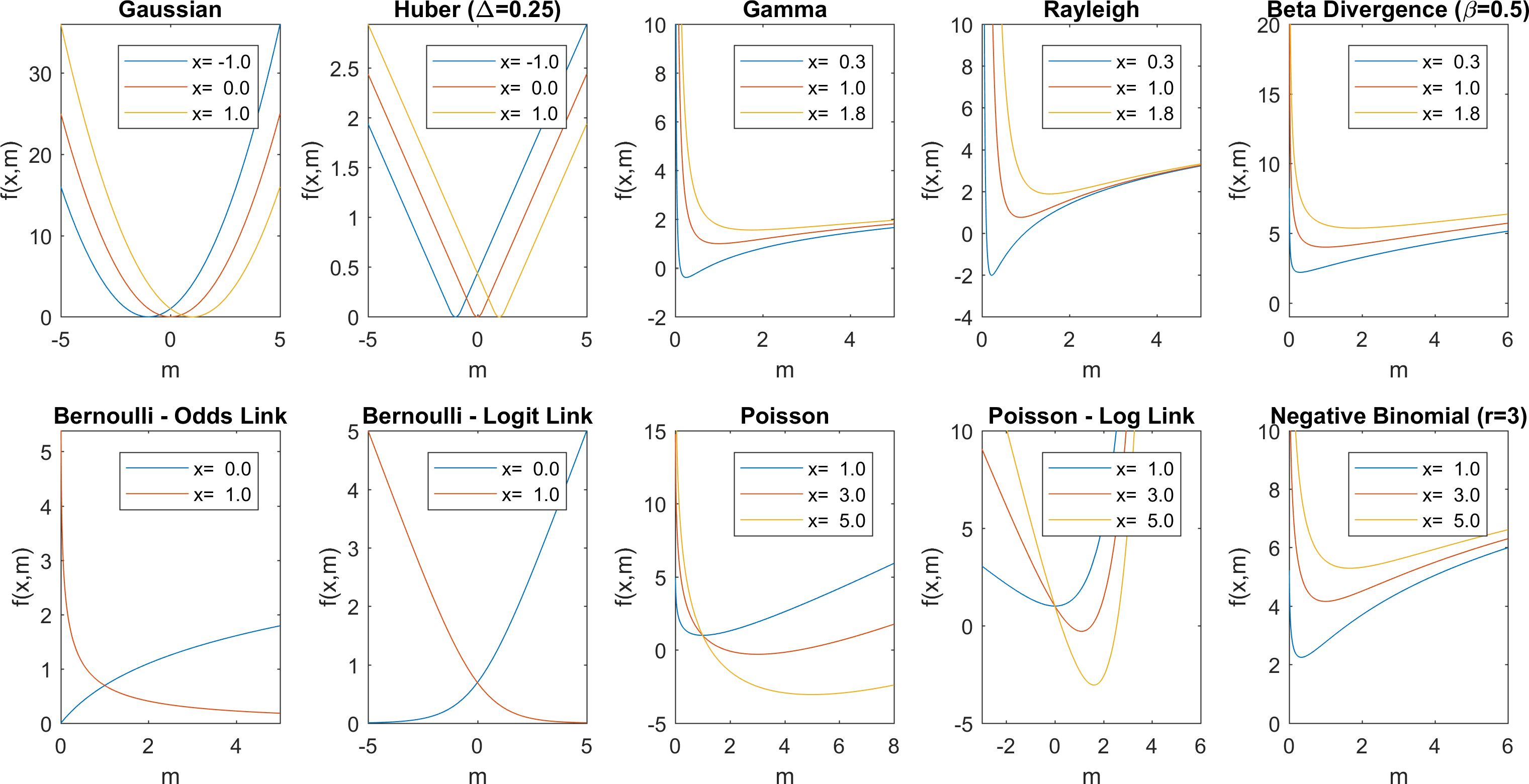}
  \caption{Graphical comparison of different loss functions. Note that some are only defined for binary or integer values of $x$ (bottom row) and that some are only defined for nonnegative values of $x$ and/or $m$.}
  \label{fig:loss_functions}
\end{figure}

\section{GCP decomposition}
\label{sec:fitting-gcp-small}

We now consider how to compute the GCP for a given elementwise loss function.
The majority of this section focuses on dense tensors.
Handling sparse or scarce tensors is discussed in \cref{sec:fitting-gcp-sparse}.

Recall that we have a data tensor $\X$ of size
$n_1 \times n_2 \times \cdots \times n_d$ and that
$\Omega \subseteq \I$ is the set of indices where the values of $\X$
are known.
For a given $r$, the objective for GCP decomposition is to find the
factor matrices $\Ak \in \Real^{n_k \times r}$ for $k=1,\dots,d$ that
solve
\begin{equation}\label{eq:gcp-loss-missing}
  \min \; F(\M; \X, \Omega) \equiv  \frac{1}{|\Omega|} \sum_{i\in\Omega}  f(\xi,\mi) \qtext{subject to} \M = \KT.
\end{equation}
We only sum over the known entries, i.e., $i \in \Omega$;
the same approach to missing data has been used for the CP
decomposition \cite{AcDuKoMo10,AcDuKoMo11}.
We scale by the constant $1/|\Omega|$ so that we are working with the
\emph{mean}. This is simply a convenience that makes it easier to
compare function values for tensors with different sizes or different
amounts of missing data.
This is an optimization problem, and we propose to solve it using
an off-the-shelf optimization method, which has been successful for the standard CP
decomposition \cite{AcDuKo11,PhTiCi13a} and is amenable to missing data \cite{AcDuKoMo10,AcDuKoMo11}.
In contrast to an alternating approach, we do not have to solve a
series of optimization problems.
The main advantage of the alternating least squares in the solution of the
standard CP decomposition is that the subproblems have closed-form solutions \cite{KoBa09};
in contrast, the GCP subproblems do not have closed-form solutions so we do not use an alternating method.

We focus on first-order methods, so we need to calculate the gradient of $F$ with respect to the factor matrices.
This turns out to have an elegant formulation as shown in \cref{sec:gcp-gradient}.
The GCP formulation \cref{eq:gcp-loss-missing} can be augmented in various ways. We might add constraints on the factor matrices such as nonnegativity.
Another option is to add L2-regularization on the factor matrices
to handle the scale ambiguity \cite{AcDuKo11}, and we explain how to do this in \cref{sec:regularization}.
We might alternatively want to use L1-regularization on the factor matrices to encourage sparsity.
The special structure for sparse and scarce tensors is discussed in \cref{sec:fitting-gcp-sparse}.

\subsection{GCP gradient}
\label{sec:gcp-gradient}
We need the gradient of $F$ in \cref{eq:gcp-loss-missing} with respect to the factor matrices,
and this is our main result in \cref{thm:grad}.
The importance of this result is that it shows that the gradient can be calculated via a standard tensor operation called the matricized tensor times Khatri-Rao product (MTTKRP), allowing us to take advantage of existing optimized implementations for this key tensor operation.
Before we get to that, we establish
some useful results in the matrix case. These will be applied to mode-$k$ unfoldings of $\M$ in the proof of \cref{thm:grad}.
The next result is standard in matrix calculus and left as an exercise for the reader.

\begin{lemma}\label{lem:ma}
  Let $\Mx{M} = \A\B'$ where $\A$ is a matrix of
  size $n \times r$ and $\B$ is a matrix of size $p \times r$.
  Then
  \begin{displaymath}
    \FPD{m_{i\ell}}{a_{i'j}} =
    \begin{cases}
      b_{\ell j}
      & \text{if } i = i', \\
      0 & \text{if } i \neq i'
    \end{cases}
    \qtext{for all} i,i'\in\nnset{n}, j\in\nnset{r}, \ell\in\nnset{p}.
  \end{displaymath}
\end{lemma}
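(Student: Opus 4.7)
The plan is to prove this by a direct elementwise computation from the definition of matrix multiplication. The statement is really just the chain rule applied to a sum of products, so there is no real obstacle; the only care needed is to be explicit about which indices are free and which are summed over, so that the Kronecker-delta behavior in $i$ versus $i'$ is visible.

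First I would expand the $(i,\ell)$ entry of $\Mx{M} = \A \B^{\intercal}$ as
\begin{displaymath}
  m_{i\ell} = \sum_{k=1}^{r} a_{ik} \, b_{\ell k},
\end{displaymath}
using that the $(i,\ell)$ entry of $\A \B^{\intercal}$ pairs row $i$ of $\A$ with row $\ell$ of $\B$ (which is column $\ell$ of $\B^{\intercal}$). Then I would differentiate termwise with respect to $a_{i'j}$. Because $\A$, $\B$ are treated as independent variables, only the factor $a_{ik}$ depends on $a_{i'j}$, and
\begin{displaymath}
  \FPD{a_{ik}}{a_{i'j}} =
  \begin{cases} 1 & \text{if } i = i' \text{ and } k = j, \\ 0 & \text{otherwise.}\end{cases}
\end{displaymath}

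Substituting this back and collapsing the sum over $k$ using the Kronecker delta on $k=j$, the sum reduces to a single surviving term $b_{\ell j}$ when $i = i'$ and to $0$ when $i \neq i'$, which is exactly the claim. The main (and only mild) subtlety is the index bookkeeping: the Khatri-Rao/summation index $k$ inside $m_{i\ell}$ must not be confused with the free column index $j$ of the derivative variable, and the row index $i$ of $m_{i\ell}$ must not be conflated with the row index $i'$ of the derivative variable. Once the indices are kept straight, the result is immediate.
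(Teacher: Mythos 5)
Your computation is correct and is exactly the standard elementwise argument the paper has in mind; the paper itself gives no proof, explicitly leaving this lemma ``as an exercise for the reader,'' and your expansion $m_{i\ell} = \sum_{k} a_{ik} b_{\ell k}$ followed by termwise differentiation and collapsing the Kronecker delta is the intended solution. No gaps.
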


Next, we consider the problem of generalized \emph{matrix} factorization in
\cref{lem:grad-matrix}, which is our linchpin result.
This keeps the index notation simple but captures exactly what we need for the main result in \cref{thm:grad}.
In \cref{lem:grad-matrix}, the matrix $\Mx{W}$ is an arbitrary matrix of weights for the terms in the summation, and
the matrix $\Mx{Y}$ (which depends on $\Mx{W}$) is a matrix of derivatives of the elementwise loss function with respect to the model.

\begin{lemma}\label{lem:grad-matrix}
  Let $\Mx{X}, \Mx{W}, \A, \B$  be matrices of size $n \times p$, $n \times p$, $n \times r$, and $p \times r$, respectively.
  Let $f: \Real \times \Real \rightarrow \Real$ be a function that is continuously differentiable w.r.t.\@ its second argument.
  Define the real-valued function $\tilde F$ as
  \begin{equation}\label{eq:gcp-matrix}
    \tilde F(\Mx{M};\Mx{X},\Mx{W}) = \sum_{i=1}^n \sum_{\ell=1}^p w_{i\ell} \, f(x_{i\ell}, m_{i\ell}) \qtext{subject to} \Mx{M} = \A\B'.
  \end{equation}
  Then the first partial derivative of $\tilde F$ w.r.t.\@ $\A$ is
  \begin{displaymath}
    \FPD{\tilde F}{\A} = \Mx{Y} \B \quad \in \Real^{n \times r}
  \end{displaymath}
  where we define the $n \times p$ matrix $\Mx{Y}$ as
  \begin{equation}\label{eq:Y-mat}
    y_{i\ell} = w_{i\ell} \; \FPD{f}{m_{i\ell}}(x_{i\ell},m_{i\ell})
    \qtext{for all} i \in \nnset{n}, \ell \in \nnset{p}.
  \end{equation}
\end{lemma}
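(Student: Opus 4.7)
The plan is to compute $\FPD{\tilde F}{a_{i'j}}$ entry-by-entry via the chain rule and then recognize the resulting sum as the $(i',j)$ entry of $\Mx{Y}\B$. Because $\tilde F$ depends on $\A$ only through $\Mx{M} = \A\B'$ and $f$ is continuously differentiable in its second argument, the chain rule immediately gives
$$\FPD{\tilde F}{a_{i'j}} = \sum_{i=1}^n \sum_{\ell=1}^p w_{i\ell} \, \FPD{f}{m_{i\ell}}(x_{i\ell},m_{i\ell}) \, \FPD{m_{i\ell}}{a_{i'j}}$$
for each $i'\in\nnset{n}$ and $j\in\nnset{r}$, where $\partial f/\partial m_{i\ell}$ denotes the derivative of $f$ with respect to its second argument evaluated at $(x_{i\ell},m_{i\ell})$.

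Next I would invoke \cref{lem:ma} to evaluate $\partial m_{i\ell}/\partial a_{i'j}$: the derivative equals $b_{\ell j}$ when $i = i'$ and vanishes otherwise. Substituting collapses the outer sum over $i$, leaving
$$\FPD{\tilde F}{a_{i'j}} = \sum_{\ell=1}^p w_{i'\ell} \, \FPD{f}{m_{i'\ell}}(x_{i'\ell},m_{i'\ell}) \, b_{\ell j} = \sum_{\ell=1}^p y_{i'\ell} \, b_{\ell j},$$
where the second equality simply applies the definition of $y_{i'\ell}$ from \cref{eq:Y-mat}. The final sum is precisely the $(i',j)$ entry of the matrix product $\Mx{Y}\B$, so assembling across all $i'$ and $j$ yields $\partial \tilde F/\partial \A = \Mx{Y}\B$ as claimed.

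The argument is essentially bookkeeping with the chain rule, so no genuine obstacle arises; the useful observation is that the bilinear structure $\Mx{M} = \A\B'$ makes each $m_{i\ell}$ depend on $\A$ through only its $i$th row, which is exactly what \cref{lem:ma} captures and what collapses the double sum over $(i,\ell)$ to a single sum over $\ell$. The $C^1$ hypothesis on $f$ is used only to justify differentiating termwise and applying the chain rule pointwise.
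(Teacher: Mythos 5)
Your proposal is correct and follows essentially the same route as the paper's own proof: apply the chain rule to $\partial\tilde F/\partial a_{i'j}$, use \cref{lem:ma} to collapse the double sum to a single sum over $\ell$, and recognize the result as the $(i',j)$ entry of $\Mx{Y}\B$. The only difference is the cosmetic relabeling of the free versus summed row index.
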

\begin{proof}
  Consider the derivative of $\tilde F$ with respect to matrix element $a_{i j}$.
  We have
  \begin{align*}
    \FPD{\tilde F}{a_{i j}}
    &= \sum_{i'=1}^n \sum_{\ell=1}^p w_{i'\ell} \, \FPD{f}{a_{i j}}(x_{i'\ell}, m_{i'\ell}) && \text{by definition of $F$}\\
    &= \sum_{i'=1}^n \sum_{\ell=1}^p w_{i'\ell} \, \FPD{f}{m_{i'\ell}}(x_{i'\ell}, m_{i'\ell}) \FPD{m_{i'\ell}}{a_{i j}} && \text{by chain rule},\\
    &= \sum_{\ell=1}^p y_{i\ell} b_{\ell j} && \text{by \cref{lem:ma} and \cref{eq:Y-mat}}.
  \end{align*}
  Rewriting this in matrix notation produces the desired result.
\end{proof}

Now we can consider the tensor of the GCP problem
\cref{eq:gcp-loss-missing} in \cref{thm:grad}.  For simplicity, we
replace $\Omega$ with an indicator tensor $\W$ such that
$\wi = \dOmega$ and rewrite $F$ using $\W$.
Although this result specifies a specific $\W$, it could
be extended to incorporate general weights
such as the relative importance of each entry; see \cref{sec:concl-future-work}
for further discussion on this topic.

\begin{theorem}[GCP Gradients] \label{thm:grad} %
  Let $\X$ be a tensor of size $n_1 \times n_2 \times \cdots \times n_d$ and $\Omega$ be the indices of known elements of $\X$.
  Let $f: \Real \times \Real \rightarrow \Real$ be a function that is continuously differentiable w.r.t.\@ its second argument.
  Define $\W$ to be an indicator tensor such that $\wi = \dOmega / |\Omega|$.
  Then we can rewrite the GCP problem \cref{eq:gcp-loss-missing} as
  \begin{equation}\label{eq:gcp-loss-w}
    \min \; F(\M; \X, \W) \equiv  \sum_{i\in\I} \wi \,  f(\xi,\mi) \qtext{subject to} \M = \KT.
  \end{equation}
  Here $\Ak$ is a matrix of size $n_k \times r$ for $k \in \nnset{d}$.
  For each mode $k$, the first partial derivative of $F$ w.r.t.\@ $\Ak$ is given by
  \begin{equation}
    \label{eq:GCP-grad}
    \FPD{F}{\Ak} = \Yk \Zk
  \end{equation}
  where $\Zk$ is defined in \cref{eq:Zk} and $\Yk$ is the mode-$k$ unfolding of a tensor $\Y$ defined by
  \begin{equation}
    \label{eq:Y}
    \yi = \wi \, \FPD{f}{\mi}(\xi,\mi)
    \qtext{for all} i \in \I.
  \end{equation}
\end{theorem}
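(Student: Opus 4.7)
The plan is to reduce the tensor gradient calculation to the matrix case already established in \cref{lem:grad-matrix} by unfolding along mode $k$. The key observation is that $F(\M;\X,\W)$ is a sum over all multi-indices $i \in \I$, but it does not matter in what order we perform the summation. In particular, for any fixed mode $k$ we can re-index the sum by the pair $(i_k, i_k')$ from \cref{eq:unfolding} without changing the value. Since unfolding is a bijection $\I \leftrightarrow \nnset{n_k} \times \nnset{n^d/n_k}$, we can rewrite
\begin{equation*}
  F(\M;\X,\W) = \sum_{i_k=1}^{n_k} \sum_{i_k'=1}^{n^d/n_k} (\Wk)_{i_k,i_k'}\, f\bigl((\Xk)_{i_k,i_k'},\,(\Mk)_{i_k,i_k'}\bigr).
\end{equation*}

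Next I invoke the Kruskal unfolding identity \cref{eq:Zk}, which says $\Mk = \Ak \Zk^{\intercal}$. Setting $\A = \Ak$ and $\B = \Zk$, the rewritten objective has exactly the form of $\tilde F(\Mx{M};\Mx{X},\Mx{W})$ in \cref{eq:gcp-matrix} with the mode-$k$ unfoldings of $\X$ and $\W$ playing the roles of the data and weight matrices. Applying \cref{lem:grad-matrix} yields
\begin{equation*}
  \FPD{F}{\Ak} = \Mx{Y}^{(k)} \Zk,
\end{equation*}
where the $(i_k,i_k')$ entry of $\Mx{Y}^{(k)}$ is $(\Wk)_{i_k,i_k'}\,(\partial f/\partial m)\bigl((\Xk)_{i_k,i_k'},(\Mk)_{i_k,i_k'}\bigr)$.

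Finally I need to identify this matrix $\Mx{Y}^{(k)}$ with $\Yk$, the mode-$k$ unfolding of $\Y$ defined in \cref{eq:Y}. This is immediate: entrywise, $\yi = \wi \,(\partial f/\partial \mi)(\xi,\mi)$, and the unfolding rule is the same bijection used above, so $(\Yk)_{i_k,i_k'} = (\Mx{Y}^{(k)})_{i_k,i_k'}$ for every pair. Substituting gives \cref{eq:GCP-grad}.

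The step requiring the most care is the bookkeeping in the re-indexing: one must check that the multi-index $i = (i_1,\ldots,i_d)$ mapped by \cref{eq:unfolding} traverses every element of $\I$ exactly once as $(i_k,i_k')$ ranges over $\nnset{n_k} \times \nnset{n^d/n_k}$, so that the tensor sum and the matrix sum genuinely agree term by term (and likewise that the same $i$ selects the same entries of $\X$, $\W$, $\M$, and $\Y$ in their unfolded forms). Once this is in place, everything else is a direct application of \cref{lem:grad-matrix}, and the elegance of the result reflects that the only mode-dependent object on the right-hand side is the Khatri-Rao product $\Zk$.
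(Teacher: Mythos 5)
Your proposal is correct and follows exactly the paper's own argument: unfold along mode $k$ so that $F(\M;\X,\W) = \tilde F(\Ak\Zk^{\intercal};\Xk,\Wk)$, apply \cref{lem:grad-matrix}, and identify the resulting weighted-derivative matrix with $\Yk$. The extra care you take with the re-indexing bijection is sound but is left implicit in the paper's proof.
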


\begin{proof}
  For a given $k$,
  recall that $\Mk = \Ak \Zk'$. Hence, we can write $F$ in \cref{eq:gcp-loss-w} as
  \begin{displaymath}
    F(\M; \X, \W) = \tilde F(\Ak"\Zk'; \Xk, \Wk),
  \end{displaymath}
  where $\tilde F$ is from \cref{eq:gcp-matrix}.
  The result follows from \cref{lem:grad-matrix} with the substitutions
  used in the following table:
  \medskip
  \begin{center}
  \renewcommand{\arraystretch}{1.2}
  \begin{tabular}{|r*{8}{>{$}l<{$}}|}
    \hline
    Matrix Case & \Mx{X} & \Mx{W} & \A & \B & \Mx{Y} & n   & p & r\\
    Tensor Case & \Xk & \Wk & \Ak & \Zk & \Yk & n_k & n^d/n_k & r \\
    \hline
  \end{tabular}
  \end{center}
  \medskip
  We note that the definition of $\Y$ is consistent across all $k$.
\end{proof}

\Cref{thm:grad} generalizes several previous results: the gradient for CP
\cite{ShHa05,AcDuKo11}, the gradient for CP in the case of missing
data \cite{AcDuKoMo11}, and the gradient for Poisson tensor
factorization \cite{ChKo12}.

Consider the gradient in \cref{eq:GCP-grad}.
The $\Zk$ has no dependence on $\X$, $\Omega$, or the loss function; it depends only on the structure of the model.
Conversely, $\Y$ has no dependence on the structure of the model.
The \emph{elementwise derivative tensor} $\Y$ is the same size as $\X$ and is zero wherever $\X$ is missing data.
The structure of $\Omega$ determines the structural sparsity of $\Y$, and this will be important in \cref{sec:fitting-gcp-sparse}.
The form of the derivative is a matricized tensor times Khatri-Rao product (MTTKRP) with the tensor $\Y$ and the Khatri-Rao product $\Zk$.
The MTTKRP is the dominant kernel in the standard CP computation in terms of computation time
and has optimized high-performance implementations \cite{BaKo07,SmKa15,LiMaYaVu16,HaBaJiTo17}.
In the dense case, the MTTKRP costs $\mathcal{O}(rn^d)$.

\Cref{alg:gcp-fg} computes the GCP loss function and gradient.
On \cref{line:gcp-fg-full} we compute elementwise values at known data locations.
If all or most elements are known, we can compute the full model using~\cref{eq:Zk} at a cost of $rn^d$.
However, if only a few elements are known, it may be more efficient to compute model values only at the locations in $\Omega$ using \cref{eq:M-low-rank} at a cost of $2r|\Omega|$.
We compute the elementwise derivative tensor $\Y$ in \cref{line:gcp-fg-y}; here the quantity $\dOmega$ is 1 if $i\in \Omega$ and 0 otherwise.
The cost of \cref{line:gcp-fg-F,line:gcp-fg-y} is $O(|\Omega|)$.
\Cref{line:gcp-fg-mttkrp-start,line:gcp-fg-mttkrp,line:gcp-fg-mttkrp-end} compute the gradient with respect to each factor matrix, and the
cost is $O(drn^d)$.
Communication lower bounds as well as a parallel implementation for MTTKRP for dense tensors are covered in \cite{BaKnRo17}.
Since this is a \textit{sequence} of MTTKRP operations, we can also consider reusing intermediate computations as has been done \cite{PhTiCi13} and reduces the $d$ part of the expense.
Hence, the cost is dominated by the MTTKRP, just as for the standard CP-ALS.
We revisit this method in the case of sparse or large-scale tensors in \cref{sec:fitting-gcp-sparse}.

\begin{algorithm}
\caption{GCP loss function and gradient}
\label{alg:gcp-fg}
   \begin{algorithmic}[1]
     \Function{gcp\_fg}{$\X$,$\Omega$,$\Akset$}
     \State\label{line:gcp-fg-full} $\mi \gets \textsc{entry}(\Akset,i)$ for all $i \in \Omega$ \Comment{Model entries}
     \State\label{line:gcp-fg-F} $F \gets \frac{1}{|\Omega|}\sum_{i \in \Omega} f(\xi,\mi)$ \Comment{Loss function}
     \State\label{line:gcp-fg-y} $\yi \gets (\dOmega/|\Omega|) \, \FPD{f}{\mi}(\xi,\mi)$  for all $i \in \I$
     \Comment{Elementwise derivative tensor}
     \For{\label{line:gcp-fg-mttkrp-start}$k = 1, \dots, d$}  \Comment{Full sequence of MTTKRPs}
     \State\label{line:gcp-fg-mttkrp} $\Gk \gets \textsc{mttkrp}(\Y, \KT, k)$ \Comment{Gradients w.r.t.~$\Ak$}
     \EndFor\label{line:gcp-fg-mttkrp-end}
     \State \Return $F$ and $\Gkset$
     \EndFunction
   \end{algorithmic}
\end{algorithm}

\subsection{Regularization}
\label{sec:regularization}

It is straightforward to add regularization to the GCP formulation.
This may especially be merited when there is a large proportion of missing data, in which case some of the factor elements may not be constrained due to lack of data. As an example, consider simple L2 regularization.
We modify the GCP problem in \cref{eq:gcp-loss-missing} to be
\begin{multline}\label{eq:gcp-loss-missing-regularization}
  \min F(\M; \X, \Omega, \set{\eta_k}) \equiv
  \frac{1}{|\Omega|} \sum_{i \in \Omega} f(\xi,\mi) + \sum_{k=1}^d \frac{\eta_k}{2} \| \Ak \|_2^2 \\
  \qtext{subject to} \M = \KT.
\end{multline}
In this case, the gradients are given by
\begin{equation}\label{eq:gcp-grad-regularization}
  \FPD{F}{\Ak} = \Yk \Zk + \eta_k \Ak,
\end{equation}
where $\Yk$ and $\Zk$ are the same as in \cref{eq:Y}.
The difficulty is in picking the regularization parameters, $\set{\eta_k}$.
These can all be equal or different, and can be selected by cross-validation using prediction of held out elements.

\subsection{GCP Decomposition for Sparse or Scarce Tensors}
\label{sec:fitting-gcp-sparse}

We say a tensor is \emph{sparse} if the vast majority of its entries
are zero. In contrast, we say a tensor is \emph{scarce} if the vast
majority of its entries are missing.  In either case, we can store
such a tensor efficiently by keeping only its nonzero/known values and
the corresponding indices. If $s$ is the number of nonzero/known values,
the required storage is $s(d+1)$ rather than $n^d$ for the dense tensor
where every zero or unknown value is stored explicitly.

The fact that $\X$ is sparse does not imply that the $\Y$ tensor needed to compute the
gradient (see \cref{thm:grad}) is sparse.
This is because $\FPD{f}{\mi}(0,\mi) \neq 0$ for general values of $\mi$.
There are two cases where the gradient has a structure that allows
us to avoid explicitly calculating $\Y$:
\begin{itemize}
\item Standard Gaussian formulation; see \cref{sec:spec-struct-stand} for details.
\item Poisson formulation with the identity link; see \cite{ChKo12} for details.
\end{itemize}
Otherwise, we have to calculate the dense $\Y$ explicitly in order to compute the gradients.
For many large-scale tensors, this is infeasible.
The fact that $\X$ is scarce, however, does imply that  the
tensor $\Y$ is sparse.
This is because all missing elements in $\X$ correspond to zeros in $\Y$.

Let us take a moment to contrast the implication of sparse versus scarce.
Recall that a sparse tensor
is one where the vast majority of elements are zero, whereas a scarce
tensor is one where the vast majority of elements are missing. The
elementwise gradient tensor $\Y$ for a sparse tensor is structurally dense, but it
is sparse for a scarce tensor. To put it another way, if $\X$ is
sparse, then the MTTKRP calculation in \cref{line:gcp-fg-mttkrp} of
\cref{alg:gcp-fg} has a dense $\Y$; but if $\X$ is scarce, then the
MTTKRP calculation uses a sparse $\Y$. Further discussion of sparse
versus scarce in the matrix case can be found in a blog post by Kolda \cite{Ko17}.
We summarize the situation in \cref{fig:sparse_vs_scarse}.

\begin{figure}[ht]
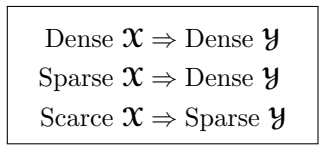

  \centering
  \fbox{%
    \begin{minipage}{0.3\linewidth}%
      \vspace{-1em}%
    \begin{align*}
    \text{Dense } \X & \Rightarrow \text{Dense } \Y \\
    \text{Sparse } \X & \Rightarrow \text{Dense } \Y \\
    \text{Scarce } \X & \Rightarrow \text{Sparse } \Y
    \end{align*}
    \end{minipage}}
  \caption{Contrasting sparsity and scarcity in GCP.}
  \label{fig:sparse_vs_scarse}
\end{figure}

The idea that scarcity yields sparsity in the gradient calculation suggests several possible
approaches for handling large-scale tensors.  One possibility is to
simply leave out some of the data, i.e., impose scarcity.  Consider
that we have a vastly overdetermined problem because we have $n^d$
observations but only need to determine $rd\bar n$ parameters.
Special care needs to be taken if the tensor is sparse, since leaving
out the vast majority of the nonzero entries would clearly degrade the
solution.  Another option is to consider stochastic gradient descent,
where the batch at each iteration can be considered as a scarce
tensor, leading again to a sparse $\Y$ in the gradient calculation.
These are topics
that we will investigate in detail in future work.

\section{Experimental results}
\label{sec:experimental-results}
The goal of GCP is to give data analysts the flexibility to try out
different loss functions. In this section, we show examples that
illustrate the
differences in the tensor factorization from using different loss
functions.
We do not claim that any particular loss function is better than any
other; instead, we want to highlight the ability to easily use different loss
functions.
Along the way, we also show the general utility of tensor decomposition, which includes:
\begin{itemize}
\item \textbf{Data decomposition into explanatory factors}:
  We can directly visualize the resulting components and oftentimes use this for interpretation.
  This is analogous
  to matrix decompositions such as
  principal component analysis, independent component analysis,
  nonnegative matrix factorization, etc.
\item \textbf{Compressed object representations}: Object $i_k$ in mode $k$
  corresponds to row $i_k$ in factor matrix $\Ak$, which is a length-$r$ vector.
  This can be used as input to regression, clustering, visualization, machine learning, etc.
\end{itemize}
We focus primarily on these types of activities. However, we could
also consider filling in missing data, data compression, etc.

All experiments are conducted in MATLAB.
The method is implemented as \texttt{gcp\_opt} in the Tensor Toolbox
for MATLAB \cite{TensorToolbox,BaKo06}.
For the optimization, we use limited-memory BFGS
with bound constraints (L-BFGS-B) \cite{ByLuNoZh95}%
\footnote{We specifically use the MATLAB-compatible translation by Stephen Becker,
  available at \url{https://github.com/stephenbeckr/L-BFGS-B-C}}.
First-order optimization methods such as L-BFGS-B typically expect a vector-valued function $f:\Real^n \rightarrow \Real$ and a corresponding vector-valued gradient, but the optimization variables in GCP are matrix-valued; see \cref{sec:gcp-optimization} for discussion of how we practically handle the required reshaping.
For simplicity, we choose a rank that works reasonably well for the purposes
of illustration. Generally, however, the choice of model rank is a complex procedure.
It might be selected based on model consistency across multiple runs,
cross-validation for estimation of hold-out data, or some
prediction task using the factors.
Likewise, we choose an arbitrary ``run'' for the purposes of illustration.
These are nonconvex optimization problems, and so we are not guaranteed that every
run will find the global minimum. In practice, a user would do a few runs and usually
choose the one with the lowest objective value.

\subsection{Social network}
\label{sec:social-net}

We consider the application of GCP to a social network dataset.
Specifically, we use a chat network from students at UC Irvine \cite{SNET,PaOpCa09, OpPa09}.
It contains transmission times and sizes of 59,835 messages sent among 1899 anonymized users over 195 days from April to October 2004.
Because many of the users included in the dataset sent few messages, we select only the 200 most prolific senders in this analysis.
We consider a three-way binary tensor of size $200 \times 200 \times 195$ of the following form:
\begin{displaymath}
  x(i_1, i_2, i_3) =
  \begin{cases}
    1 & \text{if student $i_1$ sent a message to student $i_2$ on day $i_3$,}\\
    0 & \text{otherwise}.
  \end{cases}
\end{displaymath}
It has 9764 nonzeros, so it is only 0.13\% dense though we treat it as dense in this work.
The number of interacting pairs per day is shown in \cref{fig:snet_daily}, and there is clearly more activity earlier in the study.
To give a sense of how many days any given pair of students interact, we consider the histogram in \cref{fig:snet_hist}.
The vast majority of students that interacted had only one interaction, i.e., $4 \times 10^4$ of the interactions were for only one day. The maximum number of interaction days was 33, which occurred for only one pair.

\begin{figure}[tpb]
  \centering
  \subfloat[Number of interacting pairs per day. Note the gap around day 70 and the decrease in activity toward the end of the experiment.]{\label{fig:snet_daily}%
    \includegraphics[width=0.48\textwidth]{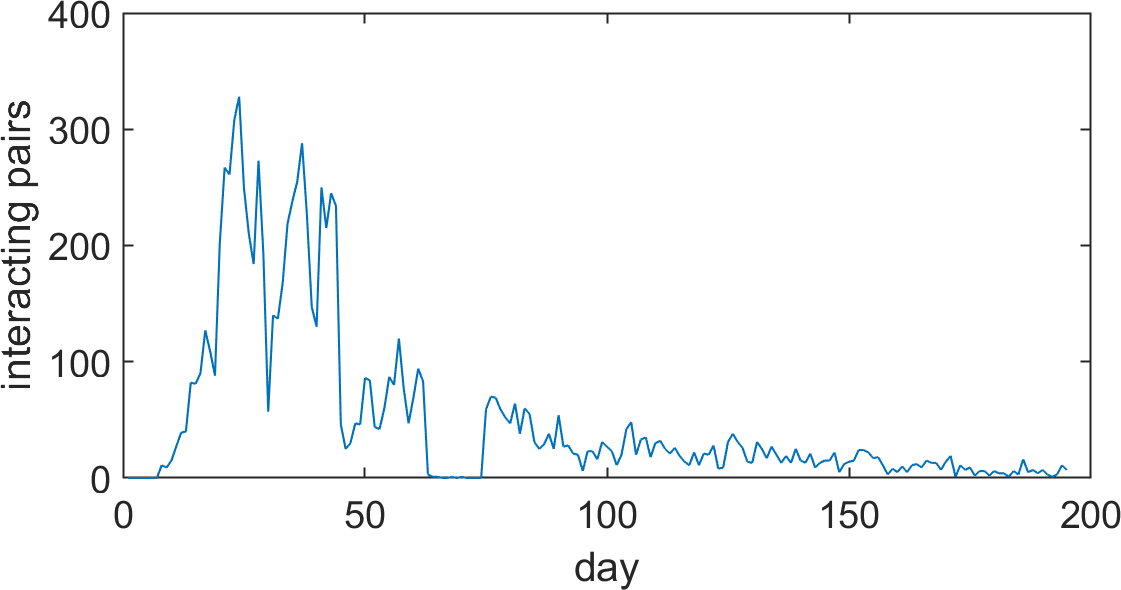}}~
  \subfloat[Histogram of number of interactions per pair where count is in the log scale. Most students only interact once. The greatest number of interaction days is 33.]{\label{fig:snet_hist}%
    \includegraphics[width=0.48\textwidth]{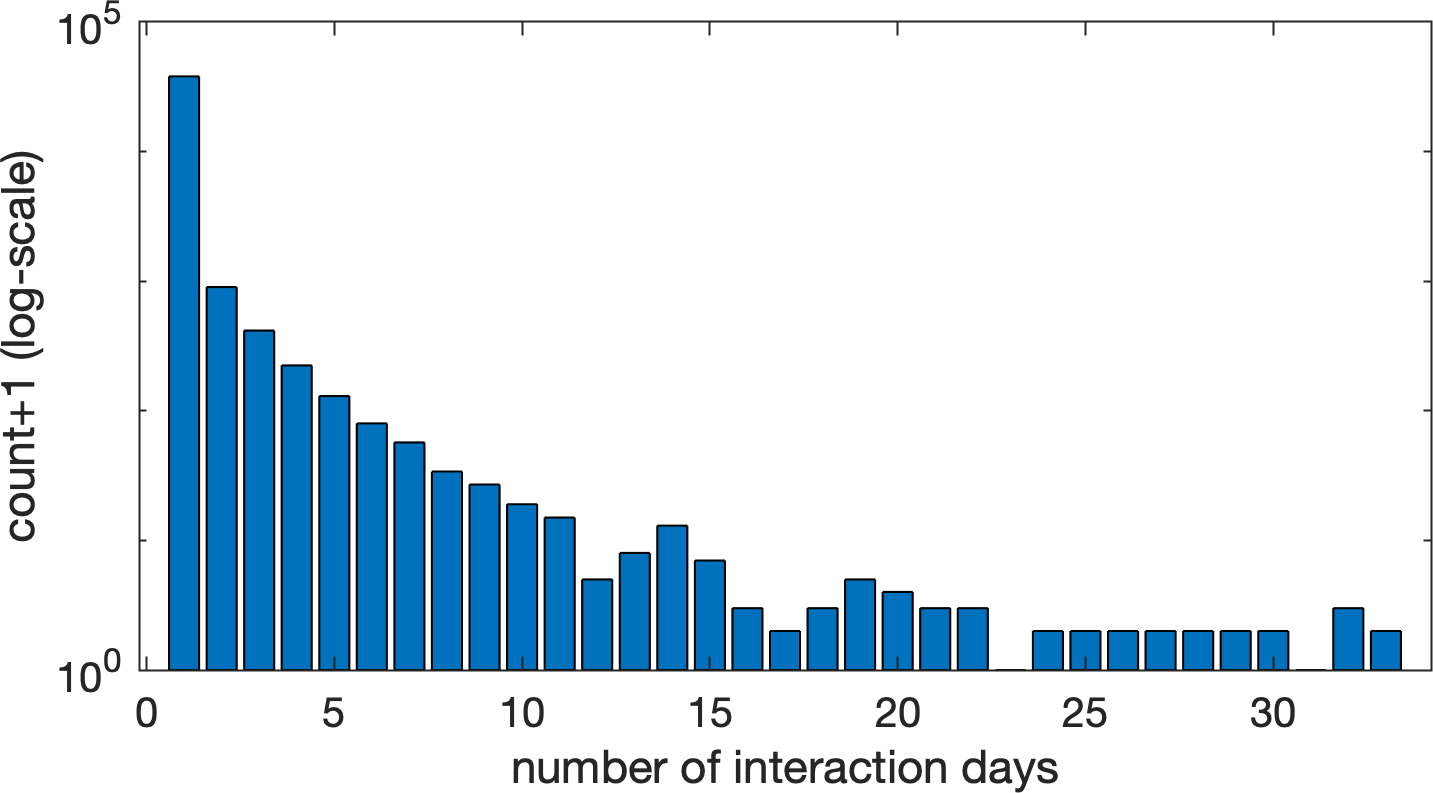}}
  \caption{Statistics for a social network tensor where $x(i_1,i_2,i_3)=1$ if student $i_1$ sends a message to student $i_2$ on day $i_3$.}
  \label{fig:snet_background}
\end{figure}

\subsubsection{Explanatory factors for social network}
\label{sec:explanatory-factors-snet}

We compare the explanatory GCP factors using three different loss functions in \cref{fig:snet}.
 Recall
that each \emph{component} is the outer product of three vectors;
these vectors are what we plot to visualize the model.
In all cases, we use $r=7$ components because it seemed to be adequately descriptive.
To visualize the factorization, components
are shown as ``rows'', numbered on the left, and ordered by magnitude.
We show all three modes as bar plots.
The first two modes correspond to students, as senders and receivers.
They are ordered from greatest to least total activity and normalized to unit length.
The third mode is day, and it is normalized to the magnitude of the component.
Each component groups students that are messaging one another along with the dates of activity.
Each loss function yields a different grouping and so a different interpretation.
The appropriateness
of any particular interpretation depends on the context.

\begin{figure}[tpb]
  \centering
  \subfloat[Gaussian (standard CP). Some factors only pick up one or two students as senders or receivers.]{\label{fig:snet_gaussian}%
  \includegraphics[width=\textwidth]{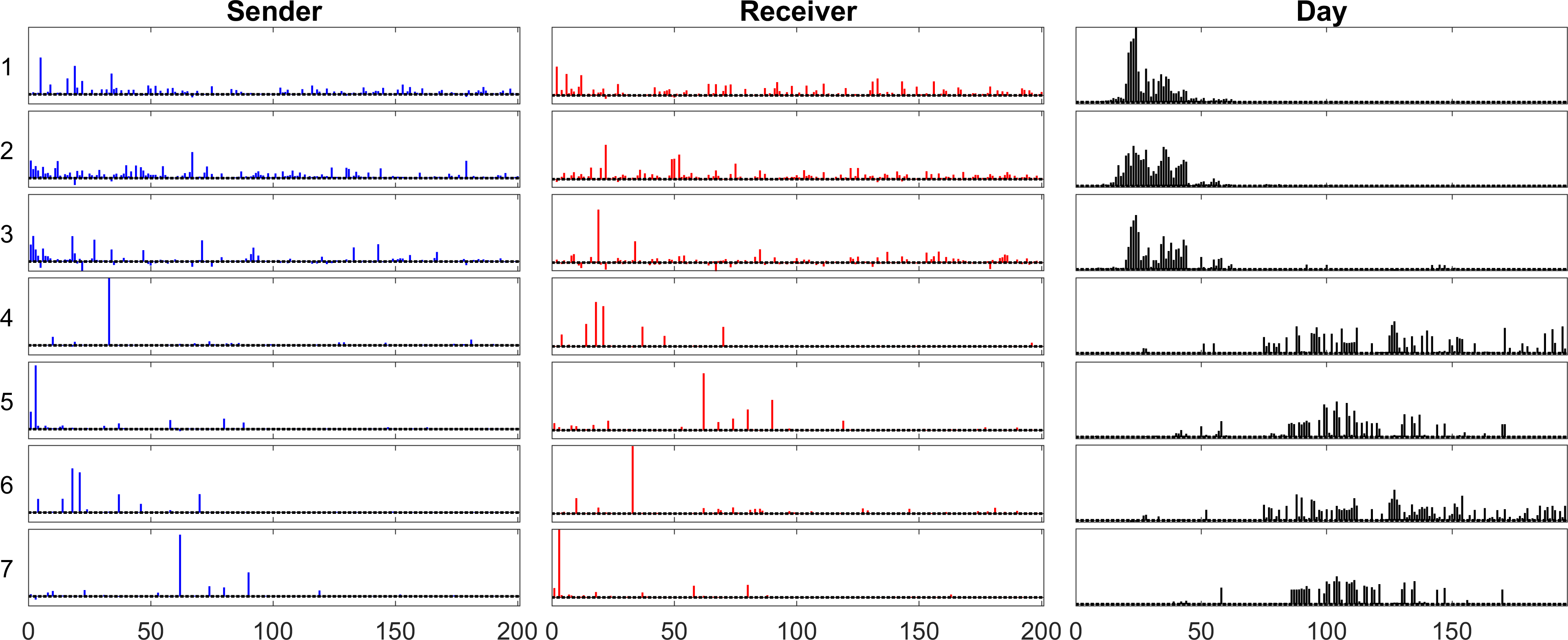}}\\
  \subfloat[Bernoulli-odds (with nonnegativity constraints). Compared with CP-ALS, many students are identified with each component and more emphasis is placed on the heavier traffic days.]{\label{fig:snet_binary}%
  \includegraphics[width=\textwidth]{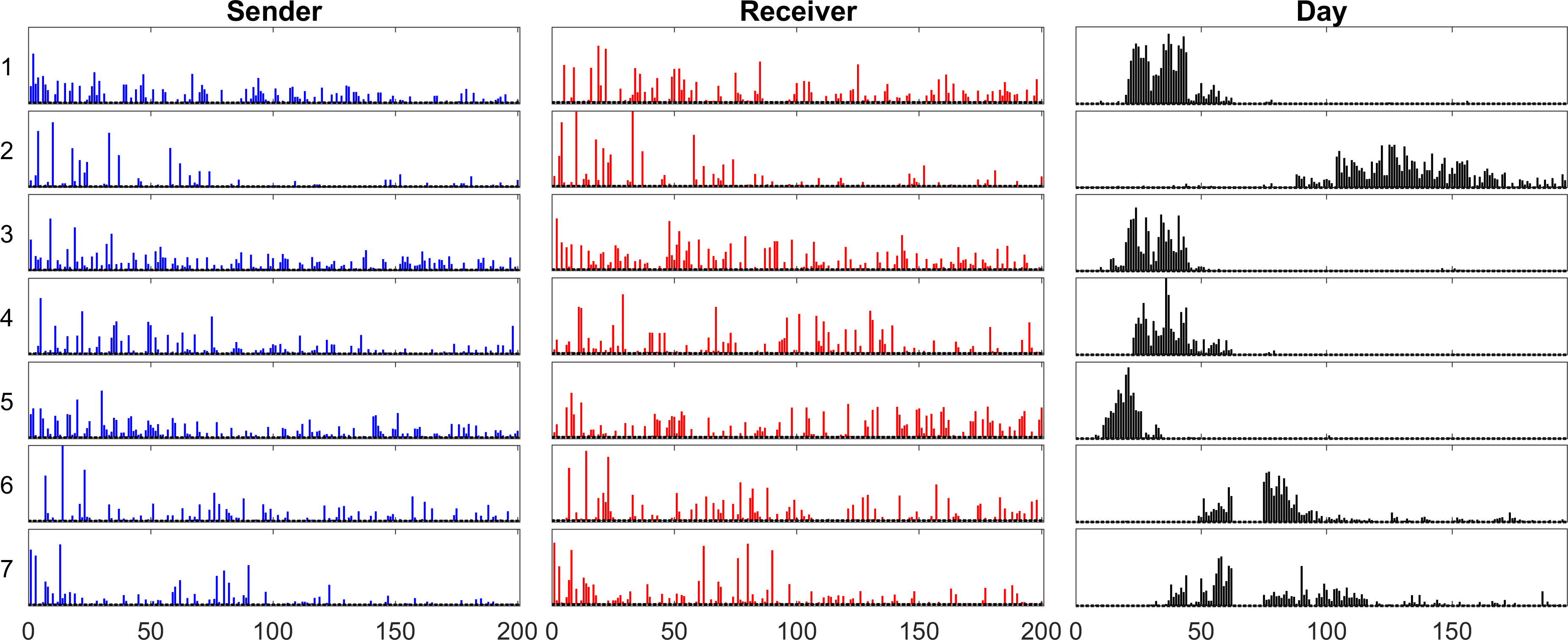}}\\
  \subfloat[Bernoulli-logit. A negative product means the likely result is a zero, i.e., no communication. The first few factors are focused primarily on the zeros.]{\label{fig:snet_binary_logit}%
  \includegraphics[width=\textwidth]{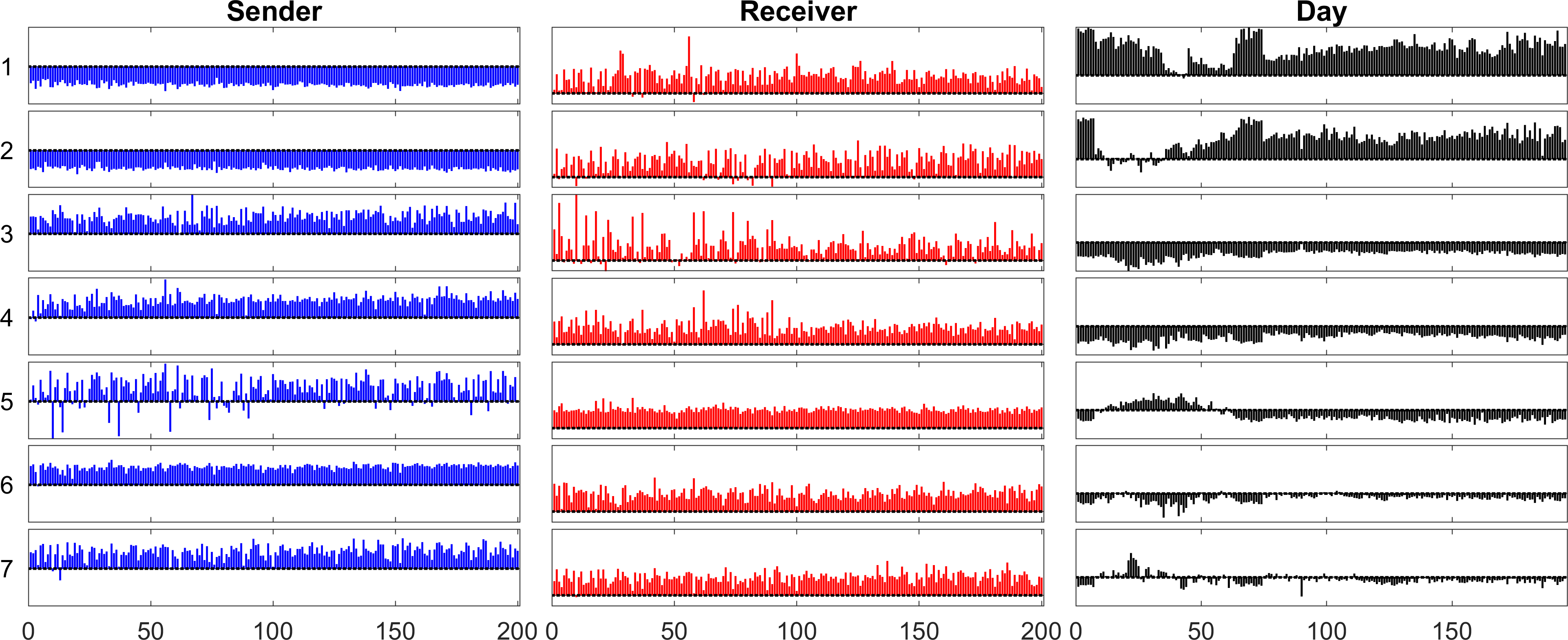}}
  \caption{GCP tensor decomposition of $200 \times 200 \times 195$ binary (0/1) social network tensor using different loss functions and $r=7$.}
  \label{fig:snet}
\end{figure}

For the standard CP in \cref{fig:snet_gaussian},
we did not add a nonnegative constraint on the factors, but
there are only a few small negative entries (see, e.g., the third component).
There is a clear temporal locality in the first three factors.
The remaining four are more diffuse.
A few sender/receiver factors  capture only a few large magnitude entries:
sender factor 4, receiver factor 6, and both sender/receiver factors 7.

For Bernoulli with an odds link in \cref{fig:snet_binary}, the
factor matrices are constrained to be nonnegative.
We see even more defined temporal locality in this version. In particular,
components 6 and 7 do not really have an analogue in the Gaussian version.
The sender and receiver factors are correlated with one another in components 2, 6, and 7, which is something that
we did not really see in the Gaussian case. Such correlations are indicative of a group talking to itself.
The factors in this case seem to do a better job capturing the activity on the most active days per \cref{fig:snet_daily}.

For Bernoulli with a logit link in \cref{fig:snet_binary_logit}, the interpretation is very different.
Recall that negative values correspond to observing zeros. The first component is
roughly inversely correlated with the activity per day, i.e., most entries are zeros and this is what is picked up.
It is only really in components 5 and 7 where there is some push toward positive values, i.e., interactions.

\subsubsection{Prediction for social network}
\label{sec:prediction-snet}
To show the benefit of using a different loss function, we consider the problem of predicting missing values.
We run the same experiment as before but hold out 50 ones and 50 zeros at random when fitting the model.
We then use the model to predict the held out values.
Let $\Omega$ denote the set of known values, so $i \not \in \Omega$ means that the entry was held out.
We measure the accuracy of the prediction using the log-likelihood
under a Bernoulli assumption, i.e., we compute
\begin{displaymath}
  \text{log-likelihood} = \sum_{{x_i=1}\atop{i \not \in \Omega}} \log p_i + \sum_{{x_i=0}\atop{i \not \in \Omega}} \log (1-p_i) ,
\end{displaymath}
where $p_i$ is the \emph{probability} of a one as predicted by the model.
A higher log-likelihood indicates a more accurate prediction.
We convert the predicted values $m_i$, computed from \cref{eq:M-low-rank}, to probabilities $p_i$ (truncated to the range $[10^{-16},1-10^{-16}]$) as follows:
\begin{itemize}
\item \textbf{Gaussian.} Let $p_i = m_i$, truncating to the range (0,1).
\item \textbf{Bernoulli-odds.} Convert from the odds ratio:  $p_i = m_i / (1+m_i)$.
\item \textbf{Bernoulli-logit.} Convert from the log-odds ratio: $p_i = e^{m_i} / (1+e^{m_i})$.
\end{itemize}
We repeat the experiment two hundred times, each time holding out a different set of 100 entries.
The results are shown in \cref{fig:snet-prediction}.
This is a difficult prediction problem since ones are extremely rare;
the differences in prediction performance were negligible for predicting the zeros but predicting the ones was much more difficult.
Both Bernoulli-odds and Bernoulli-logit consistently outperform the standard approach
based on a Gaussian loss function.
We also note that the Gaussian-based predictions were outside of the range $[0,1]$ for 11\% of the predictions, making it tricky to interpret the Gaussian-based predictions.

\begin{figure}
  \centering
  \subfloat[Prediction of 100 missing entries for 200 trials.]{
    \includegraphics[height=2in]{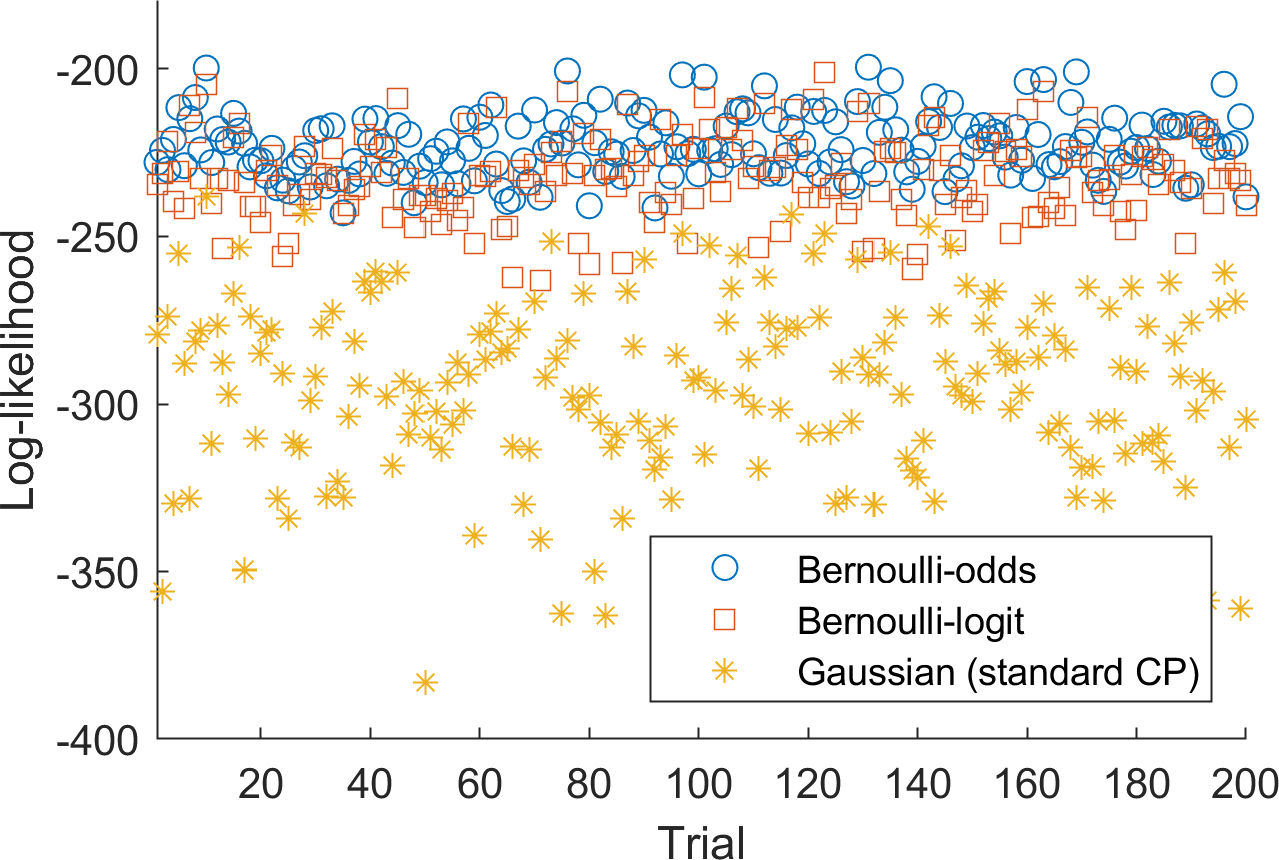}
    }
  \subfloat[Box plot of prediction results.]{
    \includegraphics[height=2in]{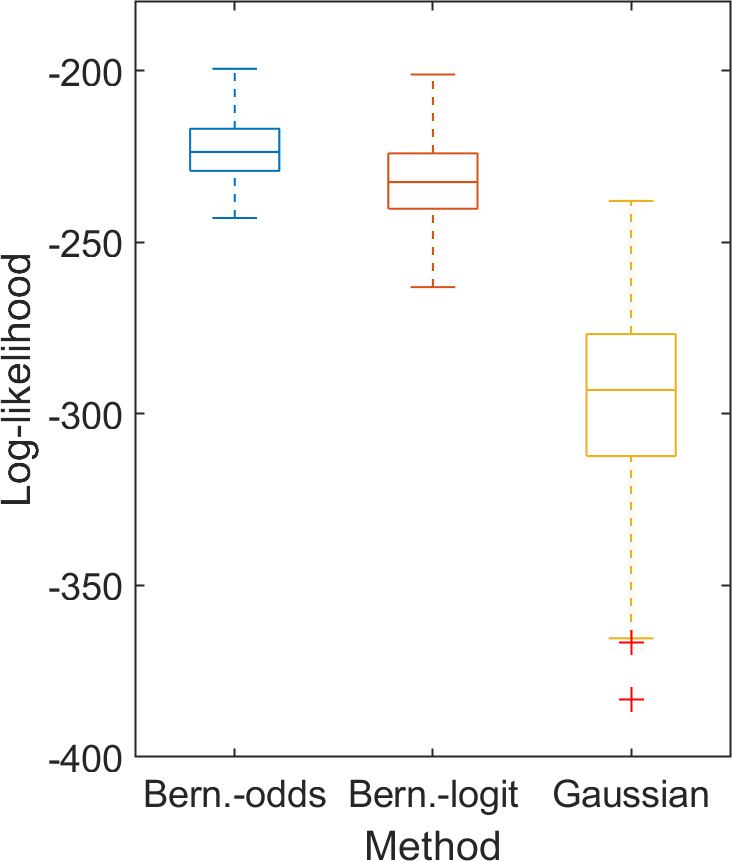}
    }
    \caption{Log-likelihood for GCP with different loss functions. Each trial holds out 50 ones and 50 zeros at random. The GCPs are computed and used to estimate each held-out value. A higher log-likelihood indicates a better prediction.
In the box plot, the box represents 25th--75th percentiles with a horizontal midline at the 50th percentile, i.e., the median. The whiskers extend to the most extreme data points that are not considered outliers, and then outliers are indicated with plus-symbols.
    }
  \label{fig:snet-prediction}
\end{figure}

\subsection{Neural activity of a mouse}
\label{sec:neur-activ-mouse}

In recent work, Williams et~al.~\cite{WiKiWaVy18} consider the
application of CP tensor decomposition to analyze the neural activity of a
mouse completing a series of trials.
They have provided us with a reduced version of their dataset to
illustrate the utility of the GCP framework.
In the dataset we study, the setup is as follows.
A mouse runs a maze over and over again, for a total of 300 trials.
The maze has only one junction, at which point the mouse must turn either right or left.
The mouse is forced to learn which way to turn in order to receive a reward.
For the first 75 trials, the mouse gets a reward if it turns right;
for the next 125 trials, it gets a reward if it turns left; and
for the final 100 trials, it gets a reward if it turns right.
Data was recorded from the prefrontal cortex of a mouse using calcium imaging;
specifically, the activity of 282 neurons was recorded and processed so that all data values lie between 0 and 1.
The neural activity in time for a few sample neurons is shown in \cref{fig:neuron_examples}; we plot each of the 300 different trials and the average value.
From this image, we can see that different neurons have distinctive patterns of activity. Additionally, we see an example of at least one neuron that is clearly active for some trials and not for others (Neuron 117).

\begin{figure}[htb]
  \centering
  \includegraphics[width=\textwidth]{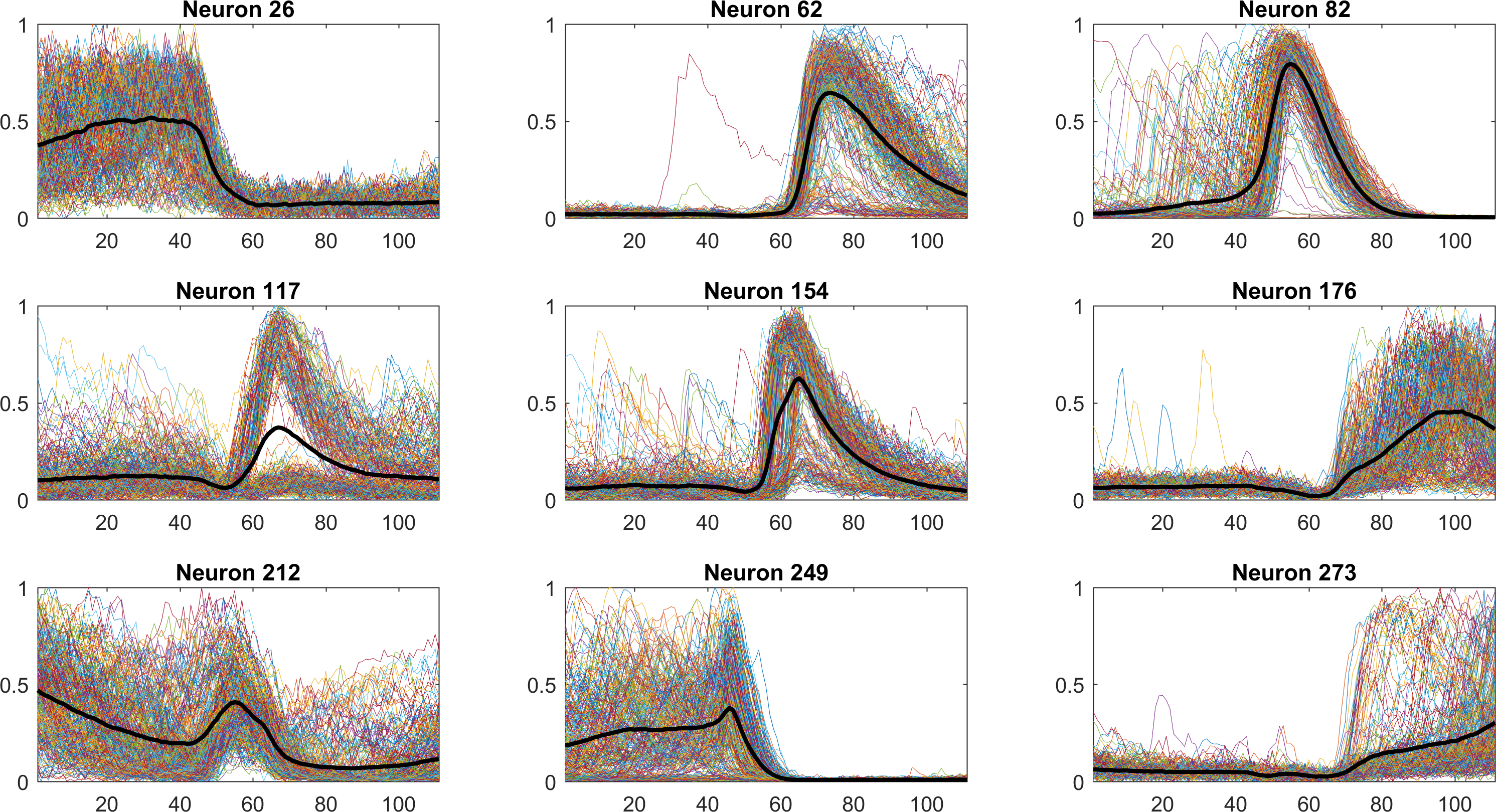}
  \caption{Example neuron activity across all trials. Each thin line (randomly colored) is the time profile for a single trial, and the single dark line is the average over all 300 trials. Different neurons have distinctive temporal patterns. Moreover, some have markedly different activity for different trials, like Neuron 117.}
  \label{fig:neuron_examples}
\end{figure}

This is large and complex multiway data.
We can arrange this data as a three-way nonnegative tensor as follows: 282 (neurons)
$\times$ 110 (time points) $\times$ 300 trials.
Applying GCP tensor decomposition reduces the data into explanatory factors, as we discuss in \cref{sec:explanatory-factors}.
We
show how the factors can be used in a regression task in \cref{sec:regression-task}.

\subsubsection{Explanatory factors for mouse neural activity}
\label{sec:explanatory-factors}

We compare the results of using different loss functions in terms of explanatory factors.
In all cases, we use $r=8$ components.
The first mode corresponds to the neurons and is normalized to the size of the component,
The second and third modes are, respectively, within-trial time and trial, each normalized to length 1.
The neuron factors are plotted as bar graphs,
showing the activation level of each neuron.  We emphasize in red the
bars that correspond to the example neurons from
\cref{fig:neuron_examples}.  The time factors are plotted as lines,
and turn out to be continuous because \emph{that is an inherent
  feature of the data itself}.  We did nothing to enforce continuity
in those factors.  The trial factors are scatter plots, color coded to
indicate which way the mouse turned.  The dot is filled in if the
mouse received a reward. When the rules changed (at trial
75 and 200, indicated by vertical dotted lines), the mouse took several trials to figure out the new
way to turn for the reward.

The result of a
standard CP analysis is shown in \cref{fig:mouse_gaussian}.
Several components are strongly correlated with the trial conditions, indicating the power of the CP analysis.
For instance, component 3 correlates with receiving a reward (filled).
Components 5, 6, and 8 correlate to turning left (orange) and
right (green).
Their time profiles align with when these activities are happening (e.g., end of trial for reward and mid-trial for turn).
The problem with the standard CP model is that interpretation of the
negative values is difficult.
Consider that neuron 212 has a significant score for nearly every component, making it hard to understand its role.
Indeed, several of the example neurons have high magnitude scores for multiple components, and so it might be hard to hypothesize
which neurons correspond to which trial conditions.

\begin{figure}[tp]
  \centering
  \subfloat[Gaussian (standard CP) is difficult to interpret because of negative values]{\label{fig:mouse_gaussian}%
    \includegraphics[width=\textwidth]{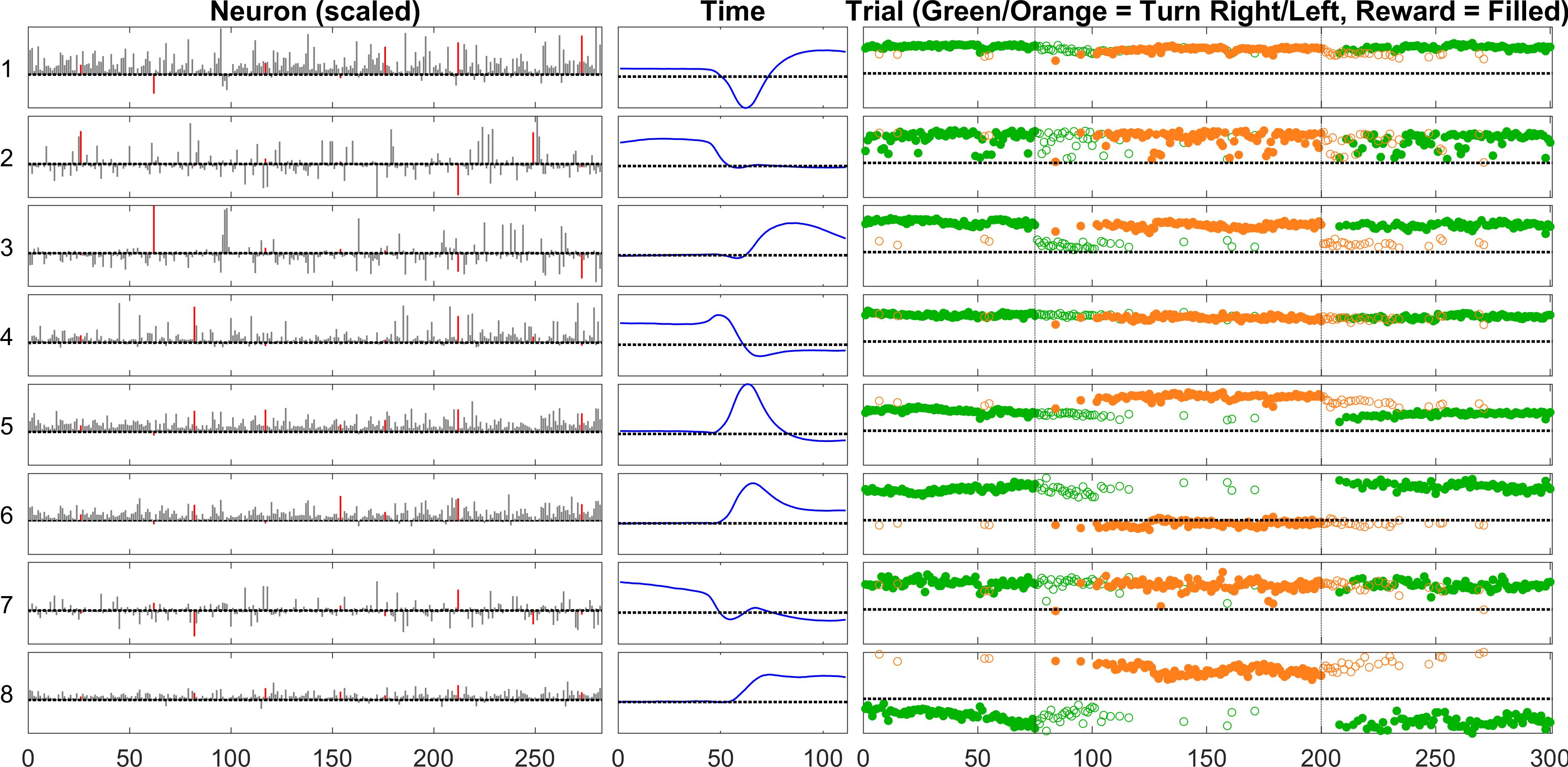}}\\
    \subfloat[Beta-divergence with $\beta{=}0.5$ has no negative factor values and so is easier to interpret]{\label{fig:mouse_beta}%
    \includegraphics[width=\textwidth]{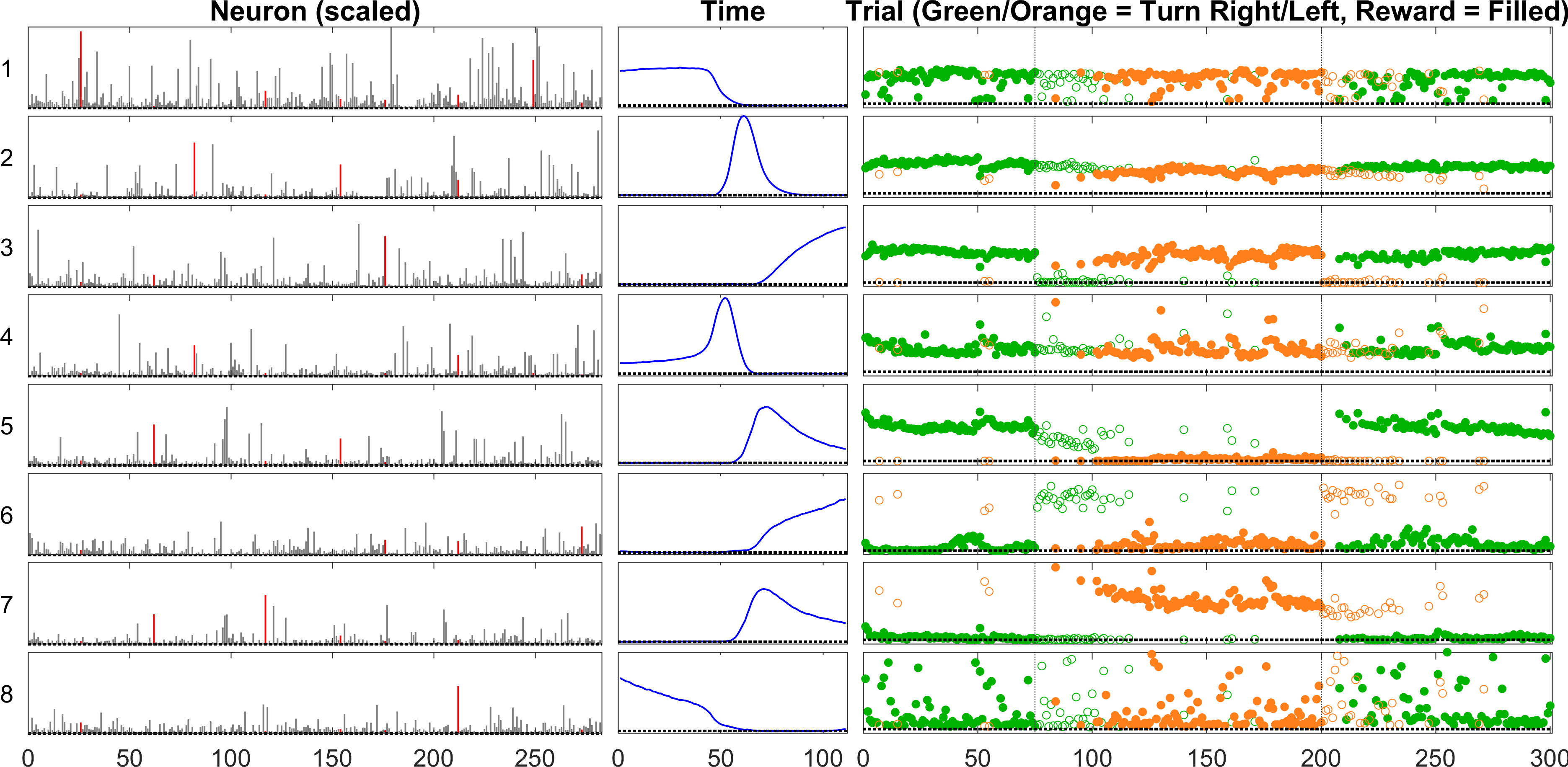}}
  \caption{GCP tensor decomposition of mouse neural activity. \MouseCaption}
\end{figure}

In contrast, consider \cref{fig:mouse_beta} which shows the results of GCP with $\beta$-divergence with $\beta=0.5$.
The factorization is arguably easier to interpret since it has only nonnegative values.
As before, we see that several components clearly correlate with the trial conditions.
Components 3 and 6 correlate with reward conditions.
Components 5 and 7 correlate to the turns.
In this case, the example neurons seem to have clearer identities with the factors.
Neuron 176 is strongest for factor 3 (reward), whereas
neuron 273 is strongest for factor 6 (no reward).
Some of the components do not correspond to the reward or turn, and we do not always know how to interpret them.
They may have to do with external factors that are not recorded in the experimental metadata.
We might also hypothesize interpretations for some  components. For instance, the second component is active mid-trial and may have to do  with detecting the junction in the maze.

For further comparison, we include the results of using Rayleigh,
Gamma, and Huber loss functions in
\cref{fig:mouse_other}.
These capture many of the same trends.

\begin{figure}[tp]
  \centering
  \subfloat[Rayleigh with nonnegativity constraints]{\label{fig:mouse_rayleigh}%
    \includegraphics[width=0.9\textwidth]{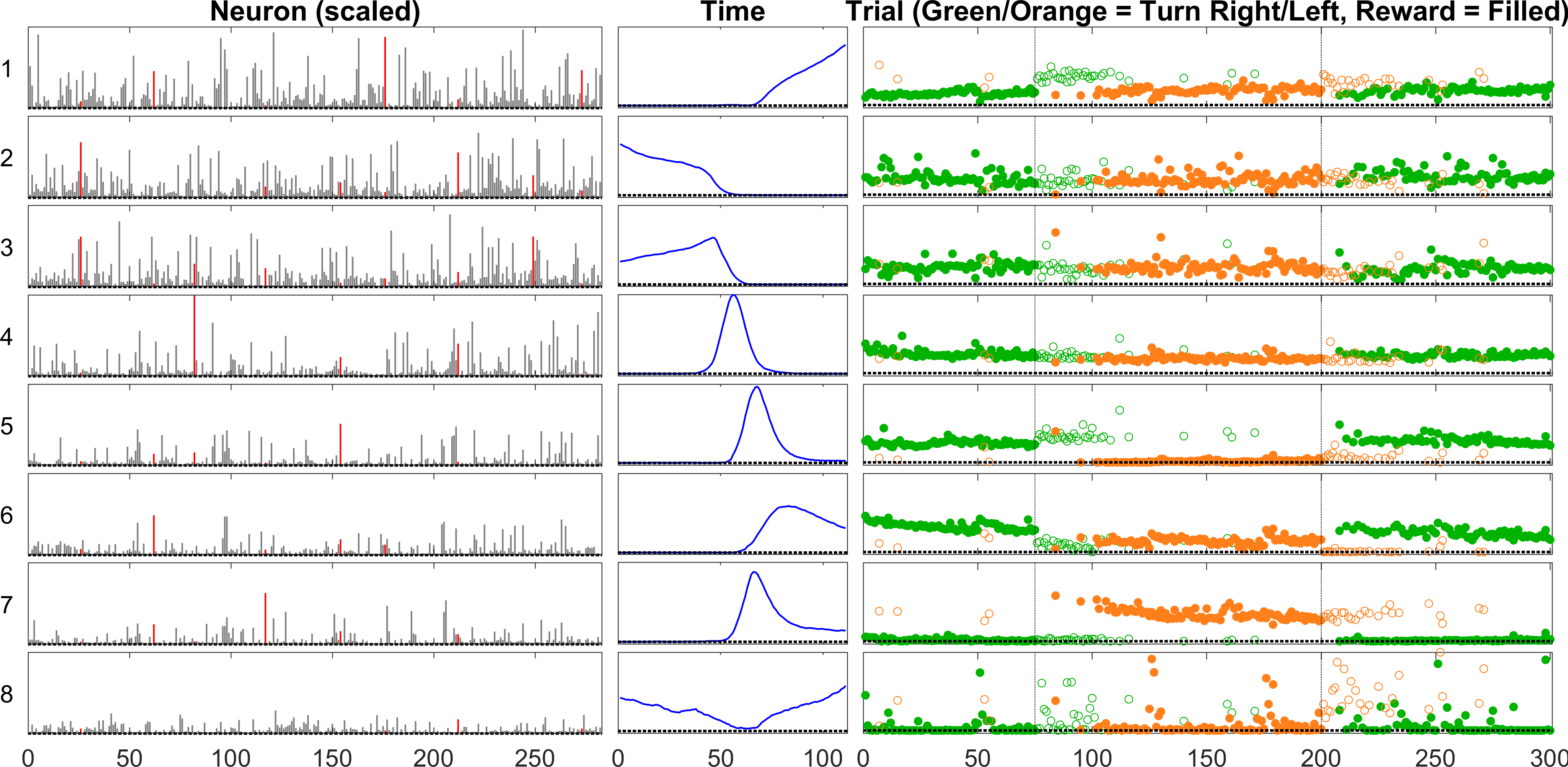}}\\
  \subfloat[Gamma with nonnegativity constraints]{\label{fig:mouse_gamma}%
    \includegraphics[width=0.9\textwidth]{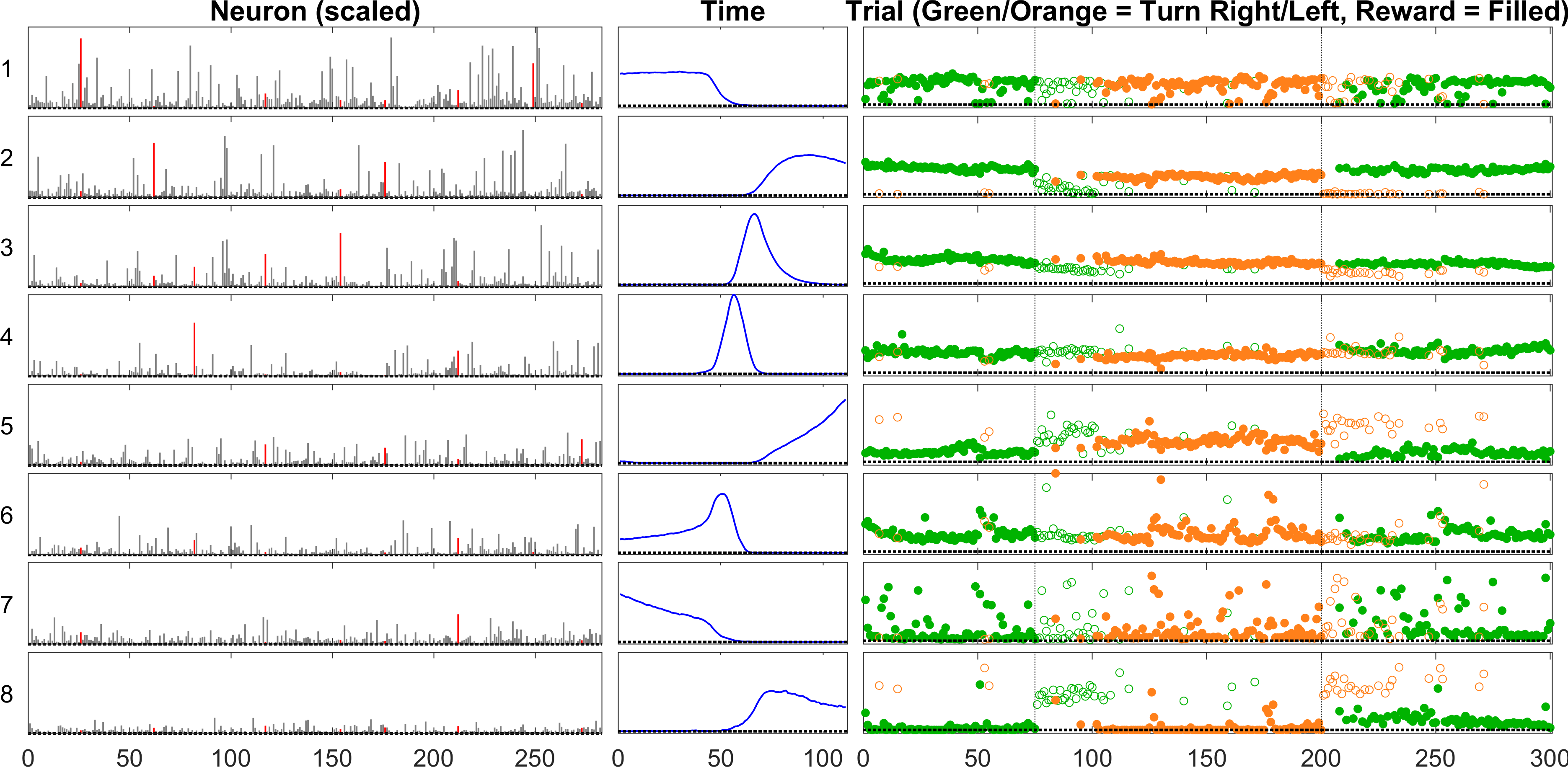}}\\
  \subfloat[Huber with $\Delta{=}0.25$ and nonnegativity constraints]{\label{fig:mouse_huber}%
    \includegraphics[width=0.9\textwidth]{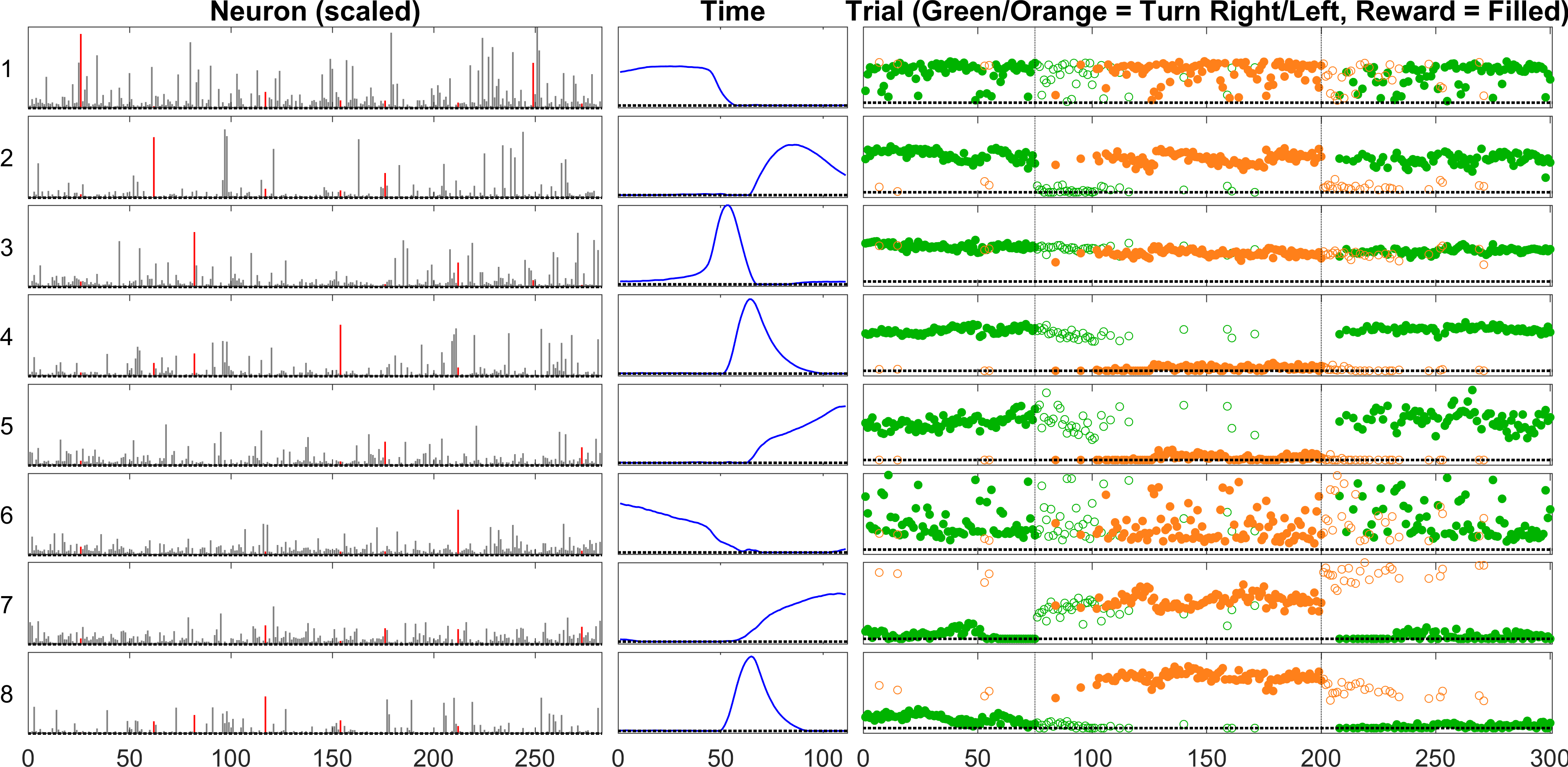}}
  \caption{Additional GCP tensor decompositions of mouse neural activity.}
  \label{fig:mouse_other}
\end{figure}

\subsubsection{Regression task for mouse neural activity}
\label{sec:regression-task}

Recall that the tensor factorization has no
knowledge of the experimental conditions, i.e., which way the mouse
turned or whether or not it received a reward.  Suppose that the
experimental logs were corrupted in such a way that we lost 50\% of
the trial indicators (completely at random rather than in a sequence). For instance, we might not know whether the mouse
turned left or right in Trial 87.  We can use the results of the
GCP tensor factorization to recover that information.
Observe that each trial is represented by 8 values, i.e., a score for each component.
These vectors can be used for regression.

Our experimental setup is as follows. We randomly selected 50\% of the 300
trials as training data and use the remainder for testing.
We do simple linear regression. Specifically, we let $\Ak{3}^{\rm train}$ be the rows of $\Ak{3}$ corresponding to the training trials
and $\mathbf{y}^{\rm train}$ be the corresponding binary responses (e.g., 1 for left turn and 0 for right turn). We solve the regression problem:
\begin{displaymath}
  \min_{\V{\beta}}
  \| \Ak{3}^{\rm train} \V{\beta} - \mathbf{y}^{\rm train} \|.
\end{displaymath}
We let $\Ak{3}^{\rm test}$ be the rows of $\Ak{3}$ corresponding to the testing trials.
Using the optimal $\V{\beta}$,  we make predictions for $\mathbf{y}^{\rm test}$ by computing
\begin{displaymath}
  \V[\hat]{y}^{\rm test} =  \left[ \Ak{3}^{\rm test} \V{\beta} \geq 0.5 \right].
\end{displaymath}
We did this 100 times, both for determining the turn direction (left or right) and the reward (yes or no).

The results are shown in \cref{tab:regression}. We caution that these are merely for illustrative purposes as changing the ranks and other
parameters might impact the relative performance of the methods.
For the turn results, shown in \cref{tab:regression_turn}, only the Gamma loss failed to achieve perfect classification.
We can see which factors were most important based on the regression coefficients. For instance, the sixth component is clearly the most important for Gaussian, whereas the fifth and seventh are key for $\beta$-divergence.
The reward was harder to predict, per the results in \cref{tab:regression_reward}.
This is likely due to the fact that there were relatively few times when the reward was not received.
For instance, the Rayleigh method performed worst, in contrast to its perfect classification for the turn direction.
Only the $\beta$-divergence achieved \emph{perfect} regression with the third component being the most important predictor.

\begin{table}
  \centering\footnotesize
  \subfloat[Turn]{\label{tab:regression_turn}
  \begin{tabular}{|c|r*7{@{\;\;\;}r}|c|c|} \hline
    Loss  & \multicolumn{8}{c|}{Regression Coefficients} & Max      & Incorrect\\
    Type & 1 & 2 & 3 & 4 & 5 & 6 & 7 & 8                 & Std.~Dev.& out of 15000\\
    \hline
Gaussian & -9.6 & 2.1 & 0.5 & -0.8 & 3.7 & 15.9 & 3.5 & 1.3 & 2.2e+00 & 0 \\ \hline
Beta Div. & 5.5 & 5.4 & -4.6 & 3.0 & 5.9 & -1.8 & -5.6 & 1.9 & 1.2e+00 & 0 \\ \hline
Rayleigh & 2.7 & 1.9 & 1.2 & 0.9 & 5.6 & 3.7 & -5.3 & -0.4 & 1.2e+00 & 0 \\ \hline
Gamma & -15.1 & 22.4 & 6.2 & 4.3 & -0.3 & -7.6 & -8.2 & 10.5 & 3.0e+00 & 1454 \\ \hline
Huber & 2.8 & -1.3 & 3.4 & 9.7 & -0.6 & 1.4 & -1.5 & -2.7 & 7.1e-01 & 0 \\ \hline
  \end{tabular}}\\
  \subfloat[Reward]{\label{tab:regression_reward}
  \begin{tabular}{|c|r*7{@{\;\;\;}r}|c|c|} \hline
    Loss  & \multicolumn{8}{c|}{Regression Coefficients} & Max      & Incorrect\\
    Type & 1 & 2 & 3 & 4 & 5 & 6 & 7 & 8                 & Std.~Dev.& out of 15000\\
    \hline
Gaussian & 11.6 & -0.5 & 18.7 & -2.1 & -6.9 & -8.6 & 0.0 & -3.2 & 3.6e+00 & 37 \\ \hline
Beta Div. & 5.1 & -0.8 & 7.4 & -0.1 & 2.8 & -3.8 & 2.6 & 2.4 & 1.1e+00 & 0 \\ \hline
Rayleigh & -6.3 & 8.5 & 8.1 & 1.0 & -1.6 & 5.1 & 1.9 & -3.0 & 1.3e+00 & 520 \\ \hline
Gamma & 10.7 & 1.9 & 0.5 & 0.3 & -2.1 & 3.6 & 5.6 & -6.4 & 1.3e+00 & 172 \\ \hline
Huber & 3.0 & 13.5 & -9.0 & 2.3 & 2.5 & 2.2 & -1.0 & 4.0 & 1.3e+00 & 62 \\ \hline
  \end{tabular}}
  \caption{Regression coefficients and prediction performance for different loss functions}
  \label{tab:regression}
\end{table}

\subsection{Rainfall in India}
\label{sec:rainfall-india}

We consider monthly rainfall data for different regions in India for the period 1901--2015,
available from Kaggle\footnote{https://www.kaggle.com/rajanand/rainfall-in-india}.
For each of 36 regions, 12 months, and 115 years, we have the total rainfall in millimeters.
There is a small amount of missing data (0.72\%), which GCP handles explicitly.
We show example monthly rainfalls for 6 regions in \cref{fig:rainfall_examples}.

\begin{figure}[tpb]
  \centering
  \includegraphics[width=0.9\textwidth]{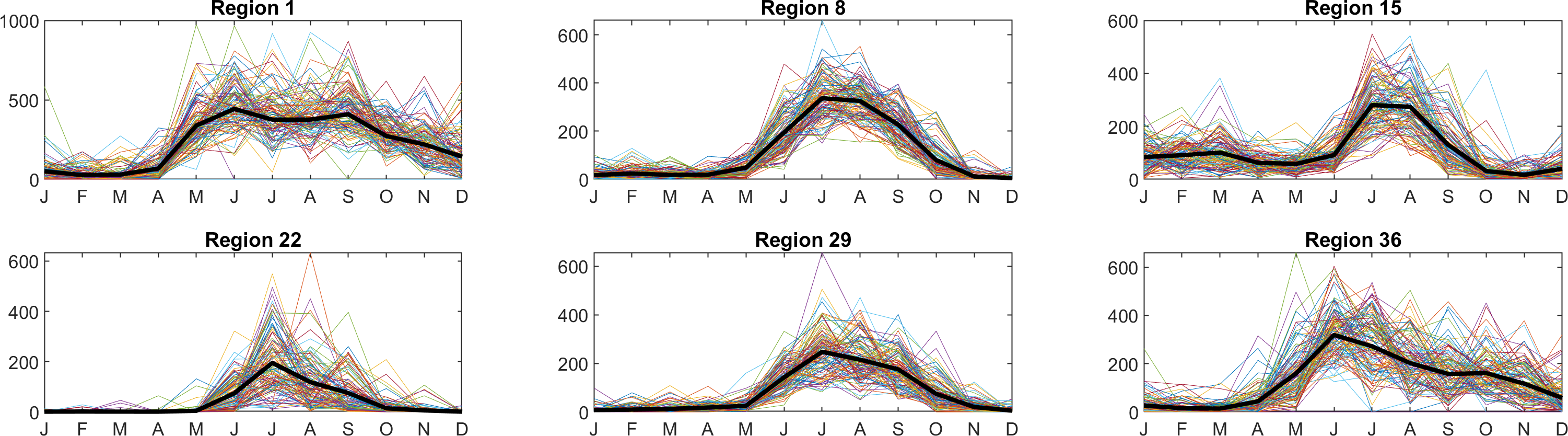}
  \caption{Rainfall totals per month in several regions in India. Each colored thin line represents a single year. The average is shown as a thick black line. Monsoon season is June -- September.}
  \label{fig:rainfall_examples}
\end{figure}

Oftentimes the gamma distribution is used to model rainfall.
A histogram of all monthly values is shown in \cref{fig:rainfall_hist} along with the estimated gamma distribution (in red), and
it seems as though a gamma distribution is potentially a reasonable model.
Most rainfall totals are very small (the smallest nonzero value is 0.1mm, which is presumably the precision of the measurements),
but the largest rainfall in a month exceeds 2300mm.
For this reason, we consider the GCP tensor decomposition with gamma loss.

\begin{figure}[tpb]
  \centering
  \includegraphics[width=0.7\textwidth]{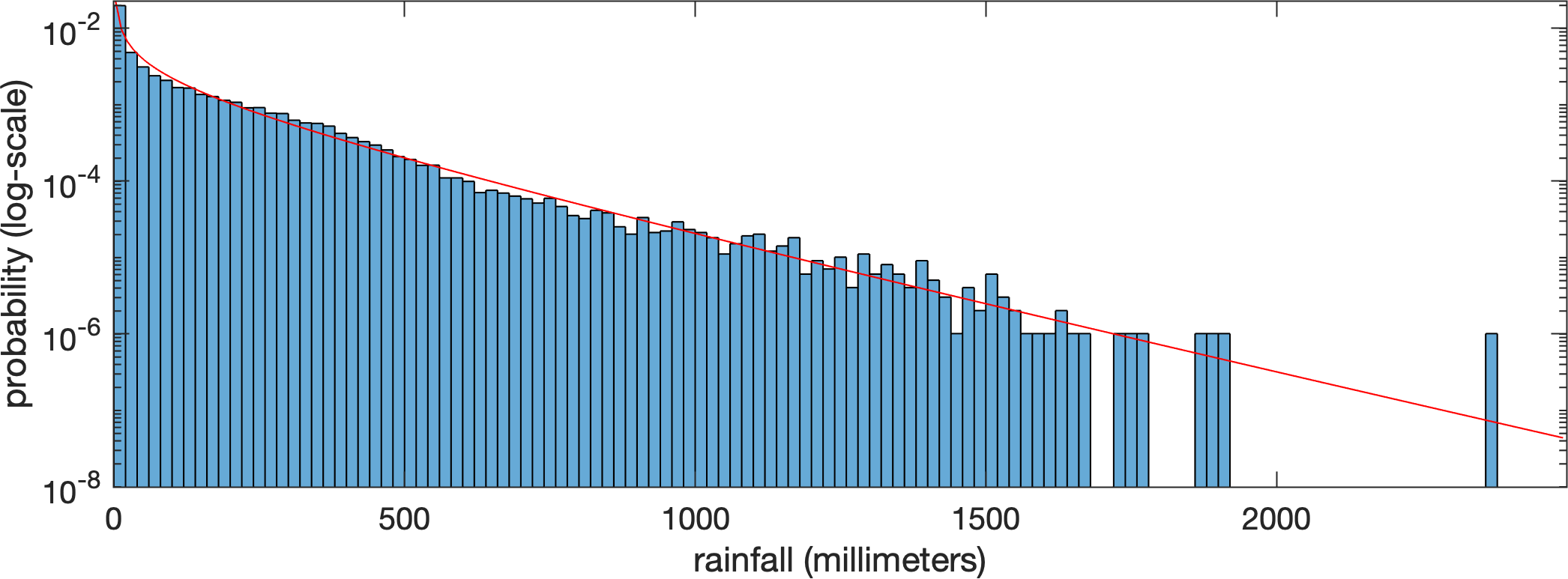}
  \caption{Histogram of monthly rainfall totals for 36 regions in India over 115 years. The estimated gamma distribution is shown in red.}
  \label{fig:rainfall_hist}
\end{figure}

A comparison of two GCP tensor decompositions is shown
in \cref{fig:rainfall}.
Factors in the first two modes (region and year) are normalized to length one, and the
monthly factor is normalized by the size of the component.
The rainfall from year to year follows no clear pattern, and this is consistent with
the general understanding of these rainfall patterns.
India is known for its monsoons, which occur in June--September of each year.

The GCP with standard Gaussian error loss and nonnegative constraints is shown in \cref{fig:rainfall_nn}.
The first component captures the period July--September, which is the main part of the monsoon season.
Components 3, 4, and 5 are dominated by a few regions. It is well known that Gaussian fitting can be swamped by
outliers, and this may be the case here.

The GCP with the gamma distribution loss function is shown in \cref{fig:rainfall_gamma}.
This captures the monsoon season primarily in the first two components. There are no particular
regions that dominate the factors.

\begin{figure}[tpb]
  \centering
  \subfloat[Gaussian (standard CP) with nonnegativity constraints, which separates July into its own component.]{\label{fig:rainfall_nn}%
    \includegraphics[width=\textwidth]{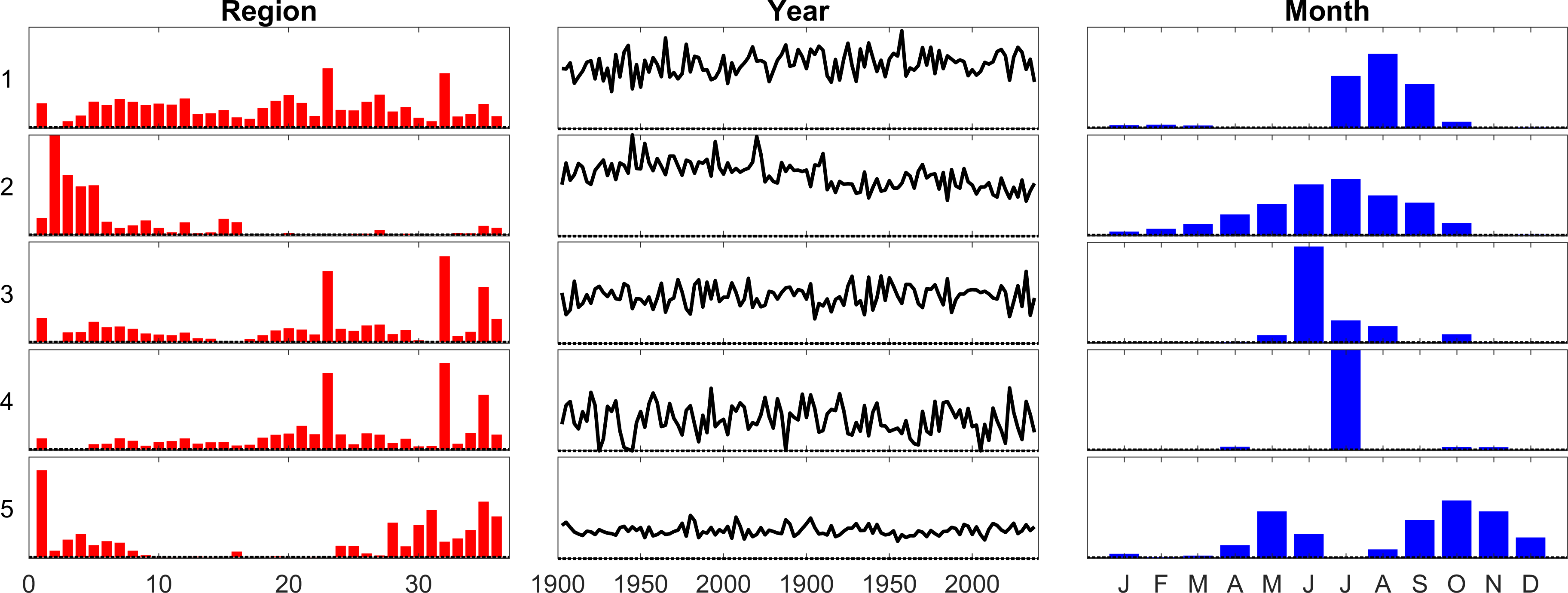}}\\
  \subfloat[Gamma (with nonnegativity constraints), which picks up the monsoon in the first component.]{\label{fig:rainfall_gamma}%
    \includegraphics[width=\textwidth]{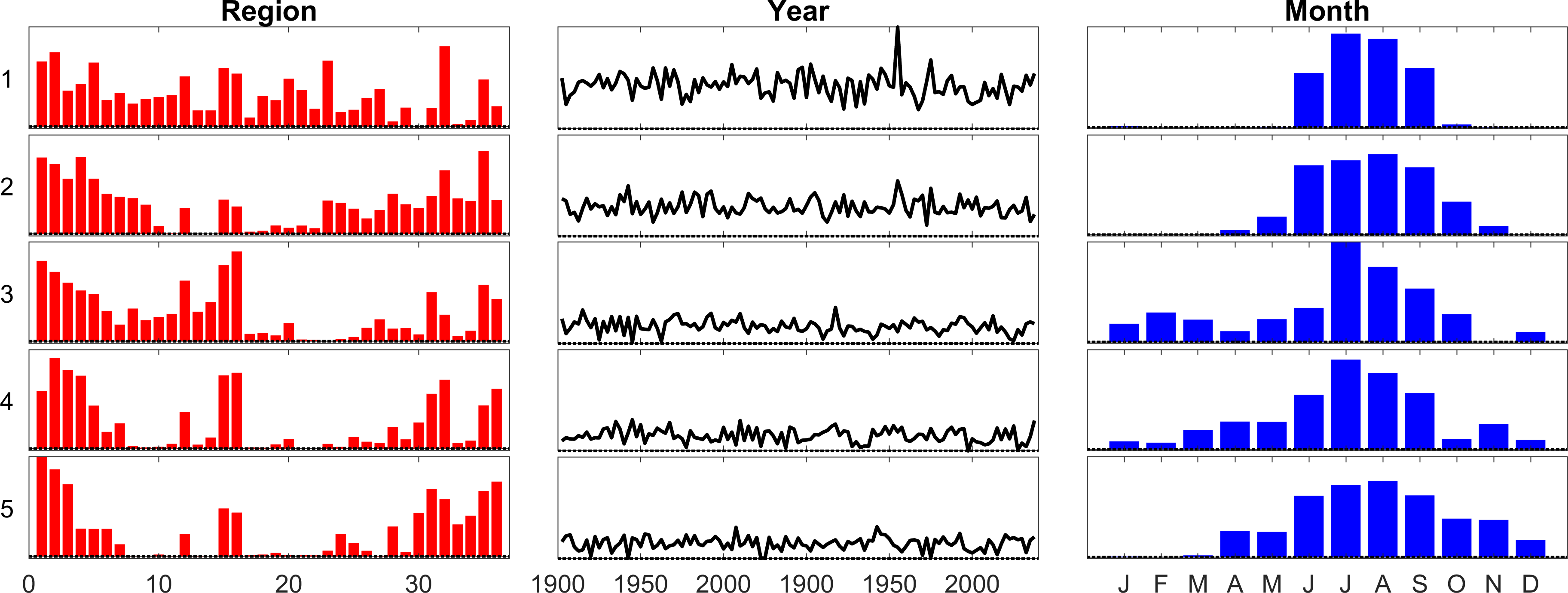}}
  \caption{GCP tensor decomposition of India rainfall data, organized into a tensor
    of 36 regions, 115 years, and 12 months. The first two modes are normalized to length 1}
  \label{fig:rainfall}
\end{figure}

\section{Conclusions and future work}
\label{sec:concl-future-work}

We have presented the GCP tensor decomposition framework which allows
the use of an arbitrary elementwise loss function, generalizing
previous works and enabling some extensions. GCP includes standard CP
tensor decomposition and Poisson tensor decomposition \cite{ChKo12}, as well as
decompositions based on beta divergences \cite{CiZdChPl07}.  Using the GCP framework, we
are able to define Bernoulli tensor decomposition for binary data,
which is something like the tensor decomposition version of logistic
regression and is derived via maximum likelihood.
Alternatively, GCP can also handle a heuristic loss function such as Huber loss.
We do not claim that any particular loss function is necessarily better than any other.
Rather,
for data analysis, it is often useful to have a variety of tools
available, and GCP provides flexibility in terms of choosing among different loss functions
to fit the needs of the analyst.
Additionally, the GCP framework \emph{efficiently} manages missing data, which is a common difficulty in practice.
Our main theorem (\cref{thm:grad}) generalizes prior results for the gradient
in the case of standard least squares, Poisson tensor factorization, and for missing data.
It further reveals that the gradient takes the form of an MTTKRP, enabling the use of efficient implementations for this key tensor operation.

In our framework, we have proposed that the weights $\wi$ be used as
indicators for missingness and restricted as $\wi \in
\set{0,1}$. To generalize this, we can easily incorporate nonnegative elementwise
weights $\wi \geq 0$.  For instance, we might give higher or lower
weights depending on the confidence in the data measurements.  In
recommender systems, there is also an idea that missing data may not be
entirely missing at random. In this case, it may be useful to treat
missing data elements as zeros but with low weights; see, e.g.,
\cite{St10a}.

For simplicity, our discussion focused on using the same elementwise loss function $f(\xi,\mi)$
for all entries of the tensor. However, we could easily define a different
loss function for every entry, i.e., $f_i(\xi,\mi)$.
The only modification is to the definition \cref{eq:Y} of the elementwise derivative tensor $\Y$.
If we have a heterogeneous mixture of data types, this may be appropriate.
In the matrix case, Udell, Horn, Zadeh, and Boyd \cite{UdHoZaBo16}
have proposed generalized low-rank models (GLRMs)
which use a different loss function for each column in matrix factorization.
We have also assumed our loss functions are continuously differentiable with respect to $\mi$,
but that can potentially be relaxed as well in the same way as done by Udell et al.~\cite{UdHoZaBo16}.

In our discussion of scarcity in \cref{sec:fitting-gcp-sparse}, we
alluded to the potential utility of imposing scarcity for scaling up
to larger scale tensors.  In stochastic gradient descent, for example, we impose
scarcity by selecting only a few elements of the tensor at each
iteration. Another option is to purposely omit most of the data,
depending on the inherent redundancies in the data (assuming it is sufficiently
incoherent).
These are topics
that we will investigate in detail in future work.

Lastly, it may also be of interest to  extend the GCP framework to functional tensor decomposition.
Garcke~\cite{Ga10}, e.g., has used hinge and Huber losses for fitting a functional
version of the CP tensor decomposition.

\appendix

\section{Kruskal tensors with explicit weights}%
\label{sec:explicit-weights}%
It is sometimes convenient to write \cref{eq:M-low-rank} with explicit positive weights $\lvec \in \Real_{+}^{r}$, i.e.,
\begin{equation}
  \label{eq:M-low-rank-lambda}
  \mi* = \sum_{j=1}^r \lambda(j) \; \Ake{1} \, \Ake{2} \cdots \Ake{d},
\end{equation}
with shorthand $\M = \KT*$.
In this case, the mode-$k$ unfolding in \cref{eq:Zk} is instead given by
\begin{displaymath}
  \Mk = \Ak \diag(\lvec) \Zk^T.
\end{displaymath}

We can also define the vectorized form
\begin{equation}
  \label{eq:Ms-unfold}
  \M = \KT*
   \Rightarrow
   \Mv = \zvec \bm{\lambda},
\end{equation}
where
\begin{equation}
  \label{eq:Z}
  \zvec \equiv \Ak{d} \odot \Ak{d-1} \odot \cdots \odot \Ak{1}
  \in \Real^{n^d \times r}.
\end{equation}

Using these definitions, it is a straightforward exercise to extend \cref{thm:grad} to the case $\M = \KT*$.

\begin{corollary}
  \label{cor:grad-alt}
  Let the conditions of \cref{thm:grad} hold except that the model has an explicit weight vector so that
  $\M = \KT*$. In this case,
  the partial derivatives of $F$ w.r.t.\@
  $\Ak$ and $\bm{\lambda}$ are
  \begin{equation}
    \FPD{F}{\Ak} =
    \Yk \Zk \diag(\bm{\lambda})
    \qtext{and}
    \FPD{F}{\bm{\lambda}} = \zvec^T \Yv,
  \end{equation}
  where $\Yk$ and $\Yv$ are, respectively, the mode-$k$ unfolding and
  vectorization of the tensor $\Y$ defined in \cref{eq:Y},
  $\Zk$ is defined in \cref{eq:Zk}, and $\zvec$ is defined in \cref{eq:Z}.
\end{corollary}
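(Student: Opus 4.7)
The plan is to reduce both claims to results already established for the unweighted case, treating the weight vector $\lvec$ as an extra parameter that enters linearly. The overall strategy mirrors the proof of \cref{thm:grad}: express the objective through an unfolding (or vectorization) that matches the hypotheses of \cref{lem:grad-matrix}, then read off the gradient.

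For $\FPD{F}{\Ak}$ I would use the mode-$k$ unfolding identity $\Mk = \Ak \diag(\lvec) \Zk^T$ stated just after \cref{eq:M-low-rank-lambda}. Rewriting this as $\Mk = \Ak \bigl(\Zk \diag(\lvec)\bigr)^T$ matches the form $\Mx{M} = \A \B^T$ in \cref{lem:grad-matrix} with the identifications $\A = \Ak$, $\B = \Zk \diag(\lvec)$, $\Mx{X} = \Xk$, $\Mx{W} = \Wk$. Exactly as in the proof of \cref{thm:grad}, the objective factors through this unfolding as $F(\M;\X,\W) = \tilde F\bigl(\Ak (\Zk \diag(\lvec))^T; \Xk, \Wk\bigr)$, so applying \cref{lem:grad-matrix} gives $\FPD{F}{\Ak} = \Yk \B = \Yk \Zk \diag(\lvec)$, with $\Yk$ exactly the mode-$k$ unfolding of the tensor $\Y$ in \cref{eq:Y}. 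This step is essentially \cref{thm:grad} with a diagonal rescaling absorbed into $\B$.

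For $\FPD{F}{\lvec}$ the cleanest route is to work in vectorized form. By \cref{eq:Ms-unfold}, $\Mv = \zvec \lvec$, which is linear in $\lvec$; in particular, $\FPD{\mi}{\lambda_j} = z_{ij}$ where $z_{ij}$ denotes the $(i,j)$ entry of $\zvec$. A direct chain rule computation, of the same flavor as the one in the proof of \cref{lem:grad-matrix}, then gives
\begin{displaymath}
  \FPD{F}{\lambda_j}
  = \sum_{i\in\I} \wi \, \FPD{f}{\mi}(\xi,\mi) \, z_{ij}
  = \sum_{i\in\I} y_i \, z_{ij},
\end{displaymath}
which is the $j$th entry of $\zvec^T \Yv$, yielding the claimed formula.

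The main obstacle is almost entirely bookkeeping: making sure the $\diag(\lvec)$ factor is placed on the correct side of $\Zk$ so that the Khatri-Rao structure is preserved, and reconciling the vectorization convention used in \cref{eq:Ms-unfold,eq:Z} with the indexing used for $\Y$. No new analytic ideas are needed beyond the matrix calculus of \cref{lem:ma,lem:grad-matrix}, since $\lvec$ enters $\M$ linearly and independently of the $\Ak$'s at the level of the unfolded factorization.
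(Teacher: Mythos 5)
Your proposal is correct and follows exactly the route the paper intends: the paper leaves this corollary as a "straightforward exercise" that extends \cref{thm:grad} via the identities $\Mk = \Ak\diag(\lvec)\Zk^T$ and $\Mv = \zvec\lvec$, and you carry out precisely that reduction, absorbing $\diag(\lvec)$ into $\B$ for the factor-matrix gradient and applying the chain rule to the linear dependence $\Mv = \zvec\lvec$ for the weight gradient.
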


\section{Special structure of standard CP gradient}
\label{sec:spec-struct-stand}
In standard CP, which uses $f(x,m) = (x-m)^2$, the gradient has special structure that can be exploited when $\X$ is sparse.
Leaving out the constant, $\FPD{f}{m} = -x + m$; therefore, $\Y = -\X + \M$.
From \cref{eq:GCP-grad}, the CP gradient is
\begin{equation}\label{eq:cp-grad}
  \FPD{F}{\Ak} = -(\Xk - \Mk)\Zk = -\Xk\Zk + \Ak(\Zk'\Zk").
\end{equation}
The first term is an MTTKRP with the original tensor, and so it can exploit sparsity if $\X$ is sparse,
reducing the cost from $\mathcal{O}(rn^d)$ to $\mathcal{O}(r^2d \cdot \text{nnz}(\X))$ and avoiding forming $\Zk$ explicitly.
The second term can also avoid forming $\Zk$ explicitly since its gram matrix is given by
\begin{equation}\label{eq:Zk-gram}
  \Zk'\Zk" = (\Ak{1}'\Ak{1}") \ast \cdots \ast (\Ak{k-1}'\Ak{k-1}") \ast  (\Ak{k+1}'\Ak{k+1}") \ast \cdots \ast (\Ak{d}'\Ak{d}"),
\end{equation}
where $\ast$ is the Hadamard (elementwise) product. This means that $\Zk'\Zk"$ is trivial to compute,
requiring only $\mathcal{O}(r^2d \bar n)$ operations.
\Cref{eq:cp-grad} is a well-known result; see, e.g., \cite{AcDuKo11}.
Computation of MTTKRP with a sparse tensor is discussed further in \cite{BaKo07}.

\section{GCP optimization}
\label{sec:gcp-optimization}
First-order optimization methods expect a vector-valued function $f:\Real^n \rightarrow \Real$ and a corresponding vector-valued gradient,
but our variable is the set of $d$ factor matrices.
Because it may not be immediately obvious,
we briefly explain how to make the conversion.
We define the function \textsc{kt2vec} to convert a Kruskal tensor, i.e., a set of factor matrices, as follows:
\begin{displaymath}
  \avec \gets \textsc{kt2vec}(\Akset) \equiv \left[ \vc(\Ak{1});\, \vc(\Ak{2});\, \dots;\, \vc(\Ak{d}) \right].
\end{displaymath}
The \vc\@ operator converts a matrix to a column vector by stacking its columns, and we use MATLAB-like semicolon notation to say that the \textsc{kt2vec} operator stacks all those vectors on top of each other. We can define a corresponding inverse operator, \textsc{vec2kt}.
The number of variables in the set of factor matrices $\Akset$ is $d r \bar n$, and this is exactly the same number in the vector version $\avec$ because it is just a rearrangement of the entries in the factor matrices. Since the entries of the gradient matrices correspond to the same entries in the factor matrices, we use the same transformation function for them.
The wrapper that would be used to call an optimization method is shown in \cref{alg:wrapper}.
The optimization method would input a vector optimization variable, this is converted to a sequence of matrices, we compute the function and gradient using \cref{alg:gcp-fg}, we turn the gradients into a vector, and we return this along with the function value.

\begin{algorithm}
  \caption{Wrapper for using first-order optimization method}\label{alg:wrapper}
  \begin{algorithmic}[1]
    \Function{gcp\_fg\_wrapper}{\avec}
    \State $\Akset \gets \textsc{vec2kt}(\avec)$
    \State $[F,\Gkset] \gets \textsc{gcp\_fg}(\X,\Omega,\Akset)$
    \State $\gvec \gets \textsc{kt2vec}(\Gkset)$
    \State \Return $[F,\gvec]$
    \EndFunction
  \end{algorithmic}
\end{algorithm}

\section{Acknowledgments}%
\label{sec:acknowledgments}%
We would like to acknowledge the contributions of our colleague Cliff
Anderson-Bergman, who served as a mentor to David Hong in this work.
Thanks for Age Smilde for suggesting negative binomial, to Michiel
Vandecappelle for inspiring the discussion of beta divergences, and to
Martin Mohlenkamp for making us aware of the work by Garcke~\cite{Ga10}.
Thanks to Brett Larsen for feedback on earlier drafts of this work.
We are extremely grateful to the anonymous referees for critical feedback that led to significant improvements. 
\bibliographystyle{siamplainmod}


\end{document}

%
%
%
%
